\newcommand\numberthis{\addtocounter{equation}{1}\tag{\theequation}}
\newtheorem{property}{Property}
\newtheorem{lemma}{Lemma}
\newtheorem{theorem}{Theorem}
\newtheorem{corollary}{Corollary}
\Crefname{property}{Property}{Properties}
\Crefname{lemma}{Lemma}{Lemmas}
\Crefname{corollary}{Corollary}{Corollaries}
\Crefname{figure}{Fig.}{Figs.}
\Crefname{equation}{Eq.}{Eqs.}
\newcommand{\remove}[1]{}
\begin{document}

\markboth{C. Binucci, E. Di Giacomo, M. Kaufmann, G. Liotta, and A. Tappini}
{$k$-planar Placement and Packing of $\Delta$-regular Caterpillars}

\title{$k$-planar Placement and Packing\\ of $\Delta$-regular Caterpillars
}

\author[1]{Carla Binucci}
\author[1]{Emilio Di Giacomo}
\author[2]{Michael Kaufmann}
\author[1]{Giuseppe Liotta}
\author[1]{Alessandra Tappini}
\affil[1]{Dipartimento di Ingegneria, Universit\`a degli Studi di Perugia, via G. Duranti 93, 06125, Perugia, Italy. \texttt{\{carla.binucci, emilio.digiacomo, giuseppe.liotta, alessandra.tappini\}@unipg.it}}

\affil[2]{Wilhelm-Schickard Institut f{\"u}r Informatik, Universit{\"a}t T{\"u}bingen, Sand 13, 72076, T{\"u}bingen, Germany. \texttt{mk@informatik.uni-tuebingen.de}}

\maketitle

\begin{abstract}
 This paper studies a \emph{packing} problem in the so-called beyond-planar setting, that is when the host graph is ``almost-planar'' in some sense. Precisely, we consider the case that the host graph is $k$-planar, i.e., it admits an embedding with at most $k$ crossings per edge, and focus on families of $\Delta$-regular caterpillars, that are caterpillars whose non-leaf vertices have the same degree $\Delta$. We study the dependency of $k$ from the number $h$ of caterpillars that are packed, both in the case that these caterpillars are all isomorphic to one another (in which case the packing is called \emph{placement}) and when they are not. We give necessary and sufficient conditions for the placement of $h$ $\Delta$-regular caterpillars and sufficient conditions for the packing of a set of $\Delta_1$-, $\Delta_2$-, $\dots$, $\Delta_h$-regular caterpillars
 such that the degree $\Delta_i$ and the degree $\Delta_j$ of the non-leaf vertices can differ from one caterpillar to another, for  $1 \leq i,j \leq h$, $i\neq j$. 
\end{abstract}

\section{Introduction}

Graph \emph{packing} is a classical problem in graph theory. The original formulation requires to merge several smaller graphs into a larger graph, called the \emph{host~graph}, without creating multiple edges. More precisely, graphs $G_1,G_2,\dots,G_h$ with $G_i = (V_i,E_i)$
should be combined to a new graph $G=(V,E)$ by injective mappings $\eta_i: V_i \rightarrow V$  so that
$V=V_1 \cup V_2 \cup \dots \cup V_h$ and the images of the edge sets $E_i$ do not intersect.
It has been often assumed that $|V_i| = n$ for all $i = 1,2, \dots h$, and thus the mappings $\eta_i$ are bijective. 
Many combinatorial problems can be regarded as packing problems. For example, the Hamiltonian cycle problem for~a graph $G$ can be stated as
the problem of packing an $n$-vertex cycle with the complement~of~$G$. 

When no restriction is imposed on the host graph, we say that the host graph is~$K_n$. Some classical results in this setting are those by Bollob\'{a}s and Eldridge~\cite{DBLP:journals/jct/BollobasE78}, Teo and Yap \cite{DBLP:journals/gc/TeoY90}, Sauer and Spencer \cite{DBLP:journals/jct/SauerS78}, while related famous conjectures are by Erd\H{o}s and S\'os from 1963~\cite{Erdos64} and by Gy\'arf\'as from 1978~\cite{gyarfas1978packing}. Within this line of research, Wang and Sauer~\cite{WANG1993137}, and Mah\'{e}o et al.~\cite{DBLP:journals/ejc/MaheoSW96} characterized triples of trees that admit a packing into $K_n$. Haler and Wang~\cite{DBLP:journals/ajc/HalerW14} extended this result to four copies of a tree.
Further notable work on graph packing into $K_n$ is by Hedetniemi et al.~\cite{hhs-npttk-81}, Wozniak and Wojda~\cite{DBLP:journals/gc/WozniakW93} and Aichholzer~et~al.~\cite{AICHHOLZER201735}.
A packing problem with identical copies of a graph is also called a \emph{placement problem} (see, e.g.,~\cite{DBLP:journals/ajc/HalerW14,WANG1993137,DBLP:journals/dm/Zak11}).

A tighter relation to graph drawing was established when researchers did not consider $K_n$ to be the host graph, but required that the host graph is planar.  
The main question here is how to pack two trees of size $n$ into a planar graph of size~$n$. After a long series of intermediate steps
\cite{DBLP:conf/cccg/Frati09,DBLP:journals/ipl/FratiGK09,DBLP:journals/jgt/GarciaHHNT02,DBLP:conf/wads/GeyerHKKT13,oo-tpptt-06} where the class of trees that could be packed has been gradually generalized,  
Geyer et al. \cite{DBLP:journals/jocg/GeyerHKKT17} showed that any two non-star trees can be embedded into a planar graph.

Relaxing the planarity condition allows for packing of more (than two) trees, and restricting the number of crossings for each edge, i.e., in the so-called beyond-planar setting~\cite{DBLP:journals/csur/DidimoLM19,DBLP:books/sp/20/HT2020,KOBOUROV201749}, still keeps the host graph sparse. 
The study of the packing problem in the beyond planarity setting was started by De Luca et al.~\cite{DBLP:journals/jgaa/LucaGHKLLMTW21}, who consider how to pack caterpillars, paths, and cycles into 1-planar graphs (see, e.g.,~\cite{KOBOUROV201749} for a survey and references on 1-planarity). While two trees can always be packed into a planar graph, it may not be possible to pack three trees into a $1$-planar graph.

\smallskip 

In this work we further generalize the problem by allowing the host graph to be $k$-planar for any $k \geq 1$, and we study the dependency of $k$ on the number of caterpillars to be packed and on their vertex degree. We consider \emph{$\Delta$-regular} caterpillars, which are caterpillars whose non-leaf vertices all have the same degree. Our results can be briefly outlined as follows.
\begin{itemize}
    \item We consider the packing problem of $h$ copies of the same $\Delta$-regular caterpillar into a $k$-planar graph. 
We characterize those families of $h$ $\Delta$-regular caterpillars which admit a placement into a $k$-planar graph and show that $k \in O(\Delta h+h^2)$.
    \item We extend the study from the placement problem to the packing problem by considering 
    a set of $\Delta_1$-, $\Delta_2$-, $\dots$, $\Delta_h$-regular caterpillars
    such that the degree $\Delta_i$ and the degree $\Delta_j$ of the non-leaf vertices can differ from one caterpillar to another, with $1 \leq i,j \leq h$, $i\neq j$. By extending the techniques of the bullet above, we give sufficient conditions for the existence of a $k$-planar packing of these caterpillars and show that $k \in O(\Delta h^2)$.
    \item Finally, we prove a general lower bound on $k$ and show that this lower bound can be increased for small values of $h$ and for caterpillars that are not $\Delta$-regular.   
\end{itemize}

The rest of the paper is organized as follows. Preliminaries are in \Cref{se:preliminaries}. The placement of $h$ $\Delta$-regular caterpillars into a $k$-planar graph is discussed in \Cref{se:kplanar-caterpillar}. \Cref{se:packing} is devoted to $k$-planar $h$-packing, while \Cref{se:lower-bounds} gives lower bounds on the value of $k$ as a function of $h$. Concluding remarks and open problems can be found in \Cref{se:open-problems}.

\section{Preliminaries}\label{se:preliminaries}

We assume familiarity with basic graph drawing and graph theory terminology (see, e.g., ~\cite{DBLP:books/ph/BattistaETT99,DBLP:conf/dagstuhl/1999dg,DBLP:books/ws/NishizekiR04}) and recall here only those concepts and notation that will be used in the paper.

Given a graph $G$, we denote by $\deg_{G}(v)$ the degree of a vertex $v$ in $G$. Let $G_1, G_2, \dots, G_h$ be $h$ graphs, all having $n$ vertices, an \emph{$h$-packing} of $G_1, G_2, \dots, G_h$ is an $n$-vertex graph $G$ that contains $G_1, G_2, \dots, G_h$ as edge-disjoint spanning subgraphs. We also say that $G_1, G_2, \dots, G_h$ can be \emph{packed into $G$} and that $G$ is the \emph{host graph} of $G_1, G_2, \dots, G_h$. An $h$-packing of $h$ graphs into a host graph $G$ such that the $h$ graphs are all isomorphic to a graph $H$, is called an \emph{$h$-placement} of $H$ into~$G$. We also say that $G_1, G_2, \dots, G_h$ can be \emph {placed into $G$}. The following property establishes a necessary condition for the existence of an $h$-packing into any host graph.

\begin{property}\label{prop:necessary}
A packing of $h$ connected $n$-vertex graphs exists only if $n \geq 2h$ and $\deg_{G_i}(v)\leq n-h$, for each $i \in \{1,2,\dots,h\}$ and for each vertex $v$.
\end{property}
\begin{proof}
Each $G_i$ has at least $n-1$ edges (because it is connected); thus, if $n < 2h$ the $h$ graphs have more edges in total than the number of edges of any graph with $n$ vertices. But since graphs $G_i$ must be edge-disjoint subgraphs of $G$, the number of edges of $G$ must be at least the total number of edges of the graphs $G_i$.
Since $\deg_{G_i}(v) \geq 1$ for every  $i \in \{1,2,\dots,h\}$ and for each $v$ (because each $G_i$ is connected) and since $\sum_{i=1}^h\deg_{G_i}(v) \leq n-1$ (because $G$ cannot have vertex-degree larger that $n-1$), it holds that $\deg_{G_i}(v)\leq n-h$, for each $i \in \{1,2,\dots,h\}$ and for each vertex $v$. 
\end{proof}

A \emph{$k$-planar graph} is a graph that admits a drawing in the plane such that each edge is crossed at most $k$ times. If the host graph of an $h$-packing ($h$-placement) is $k$-planar, we will talk about  a \emph{k-planar $h$-packing} (\emph{k-planar $h$-placement}). Sometimes, we shall simply say $k$-planar packing or $k$-planar placement, when the value of $h$ is clear from the context or not relevant. 

A \emph{caterpillar} is a tree such that removing all leaves we are left with a path, called \emph{spine}. A caterpillar $T$ is \emph{$\Delta$-regular}, for $\Delta \geq 2$, if $\deg_T(v)=\Delta$ for every vertex $v$ of the spine of $T$. The number of vertices of a  $\Delta$-regular caterpillar is $n=\sigma(\Delta -1)+2$ for some positive integer $\sigma$, which is the number of vertices of the spine.

\section{\texorpdfstring{$h$}--placement of \texorpdfstring{$\Delta$}--regular Caterpillars into \texorpdfstring{$k$}--planar Graphs}\label{se:kplanar-caterpillar}

Given $h$ copies of a same $\Delta$-regular caterpillar, we want to study under which conditions they admit a placement into a $k$-planar graph. We start by showing that the necessary condition stated in \Cref{prop:necessary} is, in general, not sufficient to guarantee a placement even for $\Delta$-regular caterpillars.

\begin{theorem}\label{th:nopacking-2h}
   For every $h \geq 2$, let $\Delta$ be a positive integer such that $\frac{h-1}{\Delta-1}$ is not an integer. A set of $h$ $\Delta$-regular caterpillars with $n=2h$ vertices does not admit a placement into any graph.
\end{theorem}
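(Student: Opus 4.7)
The plan is to derive a contradiction by counting degrees at each vertex of the host graph and showing that the required number of ``spine-roles'' per vertex must equal $\frac{h-1}{\Delta-1}$, which is forbidden to be an integer by hypothesis.

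First, I would observe that any placement of $h$ spanning edge-disjoint trees on $n=2h$ vertices has exactly $h(n-1) = h(2h-1)$ edges, which matches $\binom{2h}{2}$. Hence the host graph is forced to be $K_{2h}$, and every vertex $v$ satisfies
\[
\sum_{i=1}^{h}\deg_{T_i}(v) \;=\; 2h-1.
\]

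Next, I would exploit the $\Delta$-regular structure: in each caterpillar $T_i$, every vertex has degree either $\Delta$ (if it lies on the spine of $T_i$) or $1$ (if it is a leaf of $T_i$). Fix a vertex $v$ of the host graph and let $s_v$ denote the number of indices $i$ for which $v$ is a spine vertex of $T_i$. Then the above degree identity becomes
\[
s_v \,\Delta + (h - s_v)\cdot 1 \;=\; 2h-1,
\]
which rearranges to $s_v(\Delta-1) = h-1$, i.e.\ $s_v = \frac{h-1}{\Delta-1}$. Since $s_v$ must be a non-negative integer and the hypothesis states that $\frac{h-1}{\Delta-1}$ is not an integer, no valid assignment of spine/leaf roles exists. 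This contradicts the existence of a placement, proving the theorem.

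I do not see a genuine obstacle here: the only subtlety is the ``any graph'' quantifier in the statement, which is handled by the edge-count argument that pins the host graph to $K_{2h}$ before the degree-parity argument is applied. Everything else is a one-line counting identity, so I would present the proof as two short paragraphs along the lines above.
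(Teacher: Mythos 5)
Your proposal is correct and takes essentially the same route as the paper's own proof: both first use the edge count $h(2h-1)=\binom{2h}{2}$ to force the host graph to be $K_{2h}$, and then apply the same degree identity $c\Delta+(h-c)=2h-1$ at an arbitrary vertex to derive $c=\frac{h-1}{\Delta-1}$, contradicting integrality. No gaps.
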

\begin{proof}
Since each caterpillar has $n-1$ edges and the number of caterpillars is $h=\frac{n}{2}$, the total number of edges is $\frac{n(n-1)}{2}$ and thus, if a placement exists, the host graph can only be $K_n$. We now prove that this is not possible. Denote by $C_1, C_2,\ \dots, C_h$ the $h$ caterpillars and suppose that a packing into $K_n$ exists. Let $v$ be a vertex of $K_n$ and let $v_1,v_2,\dots,v_h$ be the $h$ vertices that are mapped to $v$, with $v_i$ being a vertex of $C_i$. Each vertex $v_i$ has degree in $C_i$ that is either $\Delta$ or $1$ (because each $C_i$ is $\Delta$-regular). Denote by $c$ the number of vertices among $v_1,v_2,\dots,v_h$ that have degree $\Delta$; the degree of $v$ in the packing is $c \Delta +(h-c)$ and since the degree of $v$ in $K_n$ is $n-1$, it must be $c \Delta +(h-c)=n-1$, i.e., $c \Delta +(h-c)=2h-1$, which can be rewritten as $c=\frac{h-1}{\Delta-1}$. But this is not possible because $c$ is integer, while $\frac{h-1}{\Delta-1}$ is not.  
\end{proof}

In the rest of this section we shall establish  necessary and sufficient conditions that characterize when a set of $h$ isomorphic $\Delta$-regular caterpillars admit a $k$-planar $h$-placement.  Concerning the sufficiency, in \Cref{sse:zig-zag} we describe a constructive argument that computes a set of so-called \emph{zig-zag drawings} and study the properties of such drawings. In \Cref{sse:characterization}, we complete the characterization by also giving necessary conditions for an $h$-placement of $\Delta$-regular caterpillars into a $k$-planar graph; in the same section, we give an upper bound on $k$ as a function of $h$ and $\Delta$. 

\smallskip

We recall that a $\Delta$-regular caterpillar has a number of vertices $n$ that is equal to $\sigma(\Delta-1)+2$ for some natural number $\sigma$, which is the number of vertices of the spine. 
While $\Delta$-regular caterpillars are defined for any value of $\sigma \geq 1$, when we want to pack a set of $h \geq 2$ caterpillars, \Cref{prop:necessary} requires that each caterpillar has at least two spine vertices, i.e., that $\sigma \ge 2$ for each caterpillar. Otherwise, the unique spine vertex would have degree $n-1$ and  \Cref{prop:necessary} would not hold. 

\subsection{Zig-zag Drawings of \texorpdfstring{$\Delta$}--regular caterpillars}\label{sse:zig-zag}

\begin{figure}[tbp]
		\centering
     	\subfigure[]{\label{fi:zig-zag-drawing-a}\includegraphics[width=0.32\columnwidth, page=5]{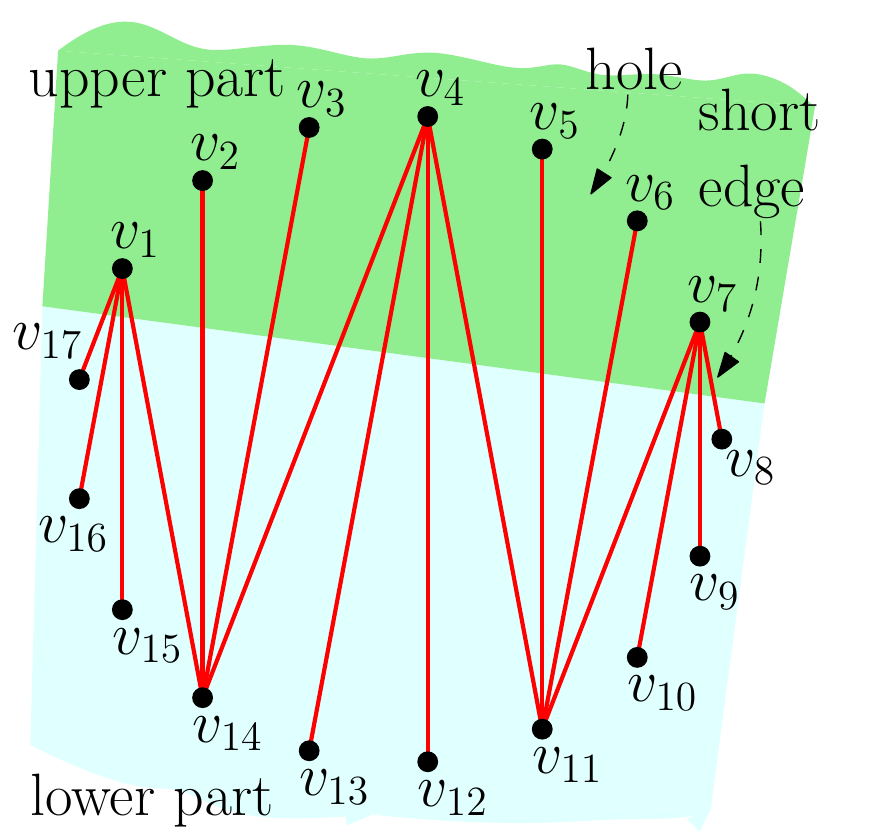}}
		\hfil
		\subfigure[]{\label{fi:zig-zag-drawing-b}\includegraphics[width=0.32\columnwidth, page=1]{zig-zag-drawing}}
		\hfil
		\subfigure[]{\label{fi:zig-zag-drawing-d}\includegraphics[width=0.32\columnwidth, page=2]{zig-zag-drawing}}
		\caption{\label{fi:zig-zag-drawing} (a) A zig-zag drawing of a $4$-regular caterpillar; (b) the upper and the lower part are highlighted; c) a $2$-packing obtained by the drawing of (b) with a copy of it rotated by one step.
		}
	\end{figure}

Let $C$ be a $\Delta$-regular caterpillar with $n$ vertices; we construct a drawing $\Gamma$
of $C$ as shown in \Cref{fi:zig-zag-drawing}. The number of vertices of the spine of $C$ is $\sigma=\frac{n-2}{\Delta-1}$; consider a set of $\sigma$ points on a circle $\gamma$ and denote by $u_1, u_2,\dots,u_\sigma$ these points according to the circular clockwise order they appear along $\gamma$.
Draw the spine of $C$ by connecting, for $i=1,2,\dots,\lfloor \frac{\sigma}{2} \rfloor$, the points $u_i$ and $u_{i+1}$ to the point $u_{\sigma-i+1}$; see \Cref{fi:zig-zag-drawing-a}. If $\sigma$ is even and $i=\frac{\sigma}{2}$, the points $u_{i+1}$ and $u_{\sigma-i+1}$ coincide and therefore the point $u_{\frac{\sigma}{2}}$ is connected only to $u_{\frac{\sigma}{2}+1}$.
Notice that all points $u_i$ have two incident edges, except $u_1$ and $u_{\lfloor \frac{\sigma}{2} \rfloor +1}$ which have only one. We add the leaves adjacent to each vertex $u_i \not \in \{u_1,u_{\lfloor \frac{\sigma}{2} \rfloor +1}\}$ by connecting $u_{\sigma-i+1}$ to $\Delta-2$ points between $u_{i}$ and $u_{i+1}$; we then add the leaves adjacent to $u_1$ by connecting it to $\Delta-1$ points between $u_\sigma$ and $u_1$; we finally add the leaves adjacent to $u_{\lfloor \frac{\sigma}{2} \rfloor +1}$ by connecting it to $\Delta-1$ points between $u_{\frac{\sigma}{2}}$ and $u_{\frac{\sigma}{2}+1}$ if $\sigma$ is even, or to $\Delta-1$ points  between $u_{\lfloor \frac{\sigma}{2} \rfloor +1}$ and $u_{\lfloor \frac{\sigma}{2} \rfloor +2}$ if $\sigma$ is odd.
The resulting drawing is called a \emph{zig-zag drawing} of $C$.

\smallskip

From now on, we assume that in a zig-zag drawing the points that represent vertices are equally spaced on the circle $\gamma$.
Let $\chi$ be the convex hull of the points representing the vertices of $C$ in $\Gamma$.
A zig-zag drawing has exactly two sides of $\chi$ that coincide with two edges of $C$; we call these two edges \emph{short edges} of $\Gamma$; each other side of $\chi$ is called a \emph{hole}. Denote by $v_1, v_2,\dots,v_n$ the vertices of $\Gamma$ according to the circular clockwise order they appear along $\chi$ with $v_1 \equiv u_1$; see \Cref{fi:zig-zag-drawing-b}. Notice that $(v_1,v_n)$ is a short edge and $v_n$ is the degree-1 vertex of this edge. 

Consider a straight line $s$ that intersects both short edges of $\Gamma$; line $s$ intersects all the  edges of the zig-zag drawing. Without loss of generality, assume that $s$ is horizontal and denote by $U$ the set of vertices that are above $s$ and by $L$ the set of vertices that are below $s$. The vertices in $U$ form the \emph{upper part} of $\Gamma$ and those in $L$ form the \emph{lower part} of $\Gamma$. Without loss of generality
 assume that $v_1$ is in the upper part (and therefore $v_n$ is in the lower part). It follows that each edge has the end-vertex with lower index in the upper part, and the end-vertex with higher index in the lower part.
 Hence the short edge different from $(v_1,v_n)$, which we denote as $(v_{r-1},v_r)$, is such that $v_{r-1}$ is in the upper part and $v_r$ is in the lower part. The first vertex of the upper part, i.e., vertex $v_1$, is called \emph{starting point} of $\Gamma$, while the first vertex of the lower part, i.e., vertex $v_r$, is called  \emph{ending point} of $\Gamma$.  We observe that $r=\frac{n}{2}+1$ if the number of vertices of the spine  $\sigma=\frac{n-2}{\Delta-1}$ is even, while $r=1+\frac{n-(\Delta-1)}{2}$ if $\sigma$ is odd. This can be written with a single formula as $r=1+\frac{n-(\Delta-1)(\sigma \mod 2)}{2}$.
 The two short edges separate two sets of consecutive holes, one completely contained in the upper part and one completely contained in the lower part; if $\sigma$ is even, these two sets have the same number of holes equal to $\frac{n-2}{2}$; if $\sigma$ is odd, then one of the two sets has $\frac{n-\Delta-1}{2}$ holes, while the other has $\frac{n+\Delta-3}{2}$. Note that the smaller set is in the upper part. 

 \smallskip

Let $\ell$ be a positive integer and let $\Gamma'$ be the drawing obtained by re-mapping vertex $v_i$ to the point\footnote{In a drawing in convex position the indices of the vertices are taken modulo $n$.}
representing $v_{i+\ell}$ in $\Gamma$. We say that $\Gamma'$ is the drawing obtained by \emph{rotating $\Gamma$ by $\ell$ steps.}
Note that the starting point of $\Gamma'$ is $v_{j}$ with $j=1+\ell$ and the ending point is $v_r$ with $r=1+\ell+\frac{n-(\Delta-1)(\sigma \mod 2)}{2}=j+\frac{n-(\Delta-1)(\sigma \mod 2)}{2}$.
The drawing in \Cref{fi:zig-zag-drawing-d} is the union of two zig-zag drawings $\Gamma_1$ and $\Gamma_2$, where $\Gamma_2$ is obtained by rotating $\Gamma_1$ by one step; the starting point of $\Gamma_1$ is $v_1$ while its ending point is $v_8$; the starting point of $\Gamma_2$ is $v_2$, while its ending point is $v_{9}$.

\begin{lemma}\label{le:rotation}
    Let $\Gamma_1$ be a zig-zag drawing of a $\Delta$-regular caterpillar $C$ with starting point $j_1$ and ending point $r_1$; let $\Gamma_2$ be a zig-zag drawing of $C$ with starting point~$j_2$. If $0 < j_2-j_1 < \frac{n-(\Delta-1)(\sigma \mod 2)}{2}$, where $\sigma$ is the number of spine vertices of $C$, then $\Gamma_1 \cup \Gamma_2$ has no multiple edges.
\end{lemma}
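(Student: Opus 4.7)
The plan is to exploit the following structural property of a zig-zag drawing $\Gamma_1$: defining the \emph{length} of an edge $(v_a,v_b)$ with $a<b$ to be $L := b-a$, every integer $L \in \{1,\dots,n-1\}$ is the length of exactly one edge of $\Gamma_1$. Indeed, each spine vertex $u_i$ is adjacent to $\Delta$ consecutive points on the circle, hence contributes a block of $\Delta$ consecutive length values; along the spine-path order $w_1 = u_1, w_2 = u_\sigma, w_3 = u_2, \dots$, these $\sigma$ blocks tile $\{1,\dots,n-1\}$ with successive blocks overlapping in exactly the single length of their shared spine edge, giving $\sigma\Delta - (\sigma-1) = n-1$ distinct lengths.

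Assume towards a contradiction that an edge $(v_a,v_b)$ with $a<b$ lies in $E(\Gamma_1) \cap E(\Gamma_2)$, and write $\ell := j_2 - j_1$. Since $\Gamma_2$ is $\Gamma_1$ with every index shifted by $\ell$ modulo $n$, the pair $(v_{a-\ell \bmod n}, v_{b-\ell \bmod n})$ also belongs to $E(\Gamma_1)$. The hypothesis $\ell < r_1 - 1$, combined with $b \ge r_1$, gives $b > \ell$, so index $b$ does not wrap. If also $a > \ell$, the shifted edge $(v_{a-\ell}, v_{b-\ell})$ has the same length $L$ as $(v_a,v_b)$, and by uniqueness the two edges coincide, forcing $\ell = 0$: contradiction. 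Hence $a \le \ell$, and only $a$ wraps; the shifted edge, reordered as $(v_{b-\ell}, v_{a-\ell+n})$, has length $n-L$, and by uniqueness must coincide with the unique edge $(v_{a_{n-L}}, v_{b_{n-L}})$ of length $n-L$. Matching endpoints gives $\ell = a_L + L - a_{n-L}$, where $(v_{a_L}, v_{b_L})$ denotes the edge of length $L$.

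It remains to prove the inequality $\ell_L := a_L + L - a_{n-L} \ge r_1 - 1$ for every $L \in \{1,\dots,n-1\}$. Each $L$ lies in the length-block of some spine vertex $w_k$, and by the tiling structure of the blocks, $n-L$ lies in the block of $w_{\sigma+1-k}$. Writing $a_L$ as either $p_{w_k}$ or $p_{w_k} - L$ according to whether $w_k$ lies in the upper or lower part of $\Gamma_1$ (equivalently, according to the parity of $k$), and likewise for $a_{n-L}$, yields a closed-form expression for $\ell_L$. When $\sigma$ is even, the drawing possesses a half-turn rotational symmetry $u_j \mapsto u_{\sigma/2 + j}$ (because the two gaps of size $\Delta - 1$ are diametrically opposite), forcing $|p_{w_k} - p_{w_{\sigma+1-k}}| = n/2$ and hence $\ell_L = n/2 = r_1 - 1$ identically. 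When $\sigma$ is odd, $w_k$ and $w_{\sigma+1-k}$ lie on the same side of $s$, and a direct evaluation of the closed form on each length in each block yields $\ell_L \ge (n-\Delta+1)/2 = r_1 - 1$, with equality attained at block endpoints. The main obstacle is this last case analysis, which requires careful bookkeeping of the spine-vertex positions $p_i$ (determined by the leaf counts in each gap) and of the boundary lengths shared between two adjacent blocks (corresponding to spine edges).
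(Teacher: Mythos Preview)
Your approach is sound and reaches the same conclusion as the paper, but through a different lens. The paper argues geometrically: given a hypothetical repeated edge $(v_i,v_g)$, it locates the pre-image $(v_{i'},v_{g'})$ in $\Gamma_1$, introduces arc-counts $\alpha,\beta,\zeta$ with $2\alpha+\beta+\zeta+4=n$, and invokes the structural fact $\beta=\zeta$ when $\sigma$ is even (respectively $|\beta-\zeta|\le\Delta-1$ when $\sigma$ is odd, justified by a figure) to force $\ell=\alpha+\beta+2\ge\frac{n-(\Delta-1)(\sigma\bmod 2)}{2}$. You instead isolate the graceful-labelling property of the zig-zag drawing---each length $L\in\{1,\dots,n-1\}$ occurs exactly once---which immediately disposes of the non-wrapping case and reduces the wrapping case to the closed form $\ell_L=b_L-a_{n-L}$. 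This quantity is precisely the paper's $\alpha+\beta+2=g-i'$ in different notation, so the two arguments are ultimately bounding the same number; what differs is how the lower bound is justified.

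Your half-turn symmetry argument for $\sigma$ even is clean and arguably more transparent than the paper's bare assertion $\beta=\zeta$. For $\sigma$ odd, however, you only outline the block-by-block case analysis and flag it as ``the main obstacle'', whereas the paper compresses everything into the single inequality $|\beta-\zeta|\le\Delta-1$, from which the bound follows in one line. If you want to finish your version without the bookkeeping, note that your formula gives $\ell_L+\ell_{n-L}=n$, so the claim $\ell_L\ge\frac{n-\Delta+1}{2}$ is equivalent to $\ell_L\le\frac{n+\Delta-1}{2}$; the latter is exactly what the paper's inequality encodes, and can be read off from the zig-zag construction with the same effort.
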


\begin{proof} 
We first observe that $\Gamma_2$ is obtained by rotating $\Gamma_1$ by $\ell$ steps, where $\ell=j_2-j_1$. Suppose that a multiple edge $(v_i,v_g)$, with $i<g$ exists in $\Gamma_1 \cup \Gamma_2$. This implies that in the drawing $\Gamma_1$ there must be an edge $(v_{i'},v_{g'})$ that, when rotated by $\ell$ steps, coincides with $(v_i,v_g)$. In other words, the two edges $(v_i,v_g)$ and $(v_{i'},v_{g'})$ must be such that: (i) $i' < i < r_1 \leq g < g'$; (ii) $g=i'+\ell$; (iii) the number $\alpha$ of vertices encountered between $v_i$ and $v_g$ when going clockwise from $v_i$ to $v_g$ is the same as the number of vertices encountered  when going clockwise from $v_{g'}$ to $v_{i'}$  (see Fig.~\ref{fi:overlapping}).
Denote by $\beta$ the number of vertices encountered when going clockwise from $v_{i'}$ to $v_i$, and by $\zeta$ the number of vertices encountered when going clockwise from $v_g$ to $v_{g'}$. We have $2\alpha+\beta+\zeta+4=n$. If $\sigma$ is even, then $\beta=\zeta$ (see \Cref{fi:overlapping-a}), which implies $\alpha+\beta+2=\frac{n}{2}$. Notice that $g=i'+\ell$ implies that $\ell=\beta+\alpha+2$ ($\ell$ is equal to the number of vertices encountered clockwise between $v_{i'}$ and $v_g$ plus one) and therefore $(v_{i'},v_{g'})$ can coincide with $(v_i,v_g)$ after a rotation of $\ell$ steps only if $\ell=\frac{n}{2}$ but, when $\sigma$ is even, we have $\ell=j_2-j_1<\frac{n}{2}$ and therefore a multiple edge cannot exist.

\begin{figure}[tbp]
		\centering
    	\subfigure[]{\label{fi:overlapping-a}\includegraphics[width=0.32\columnwidth, page=1]{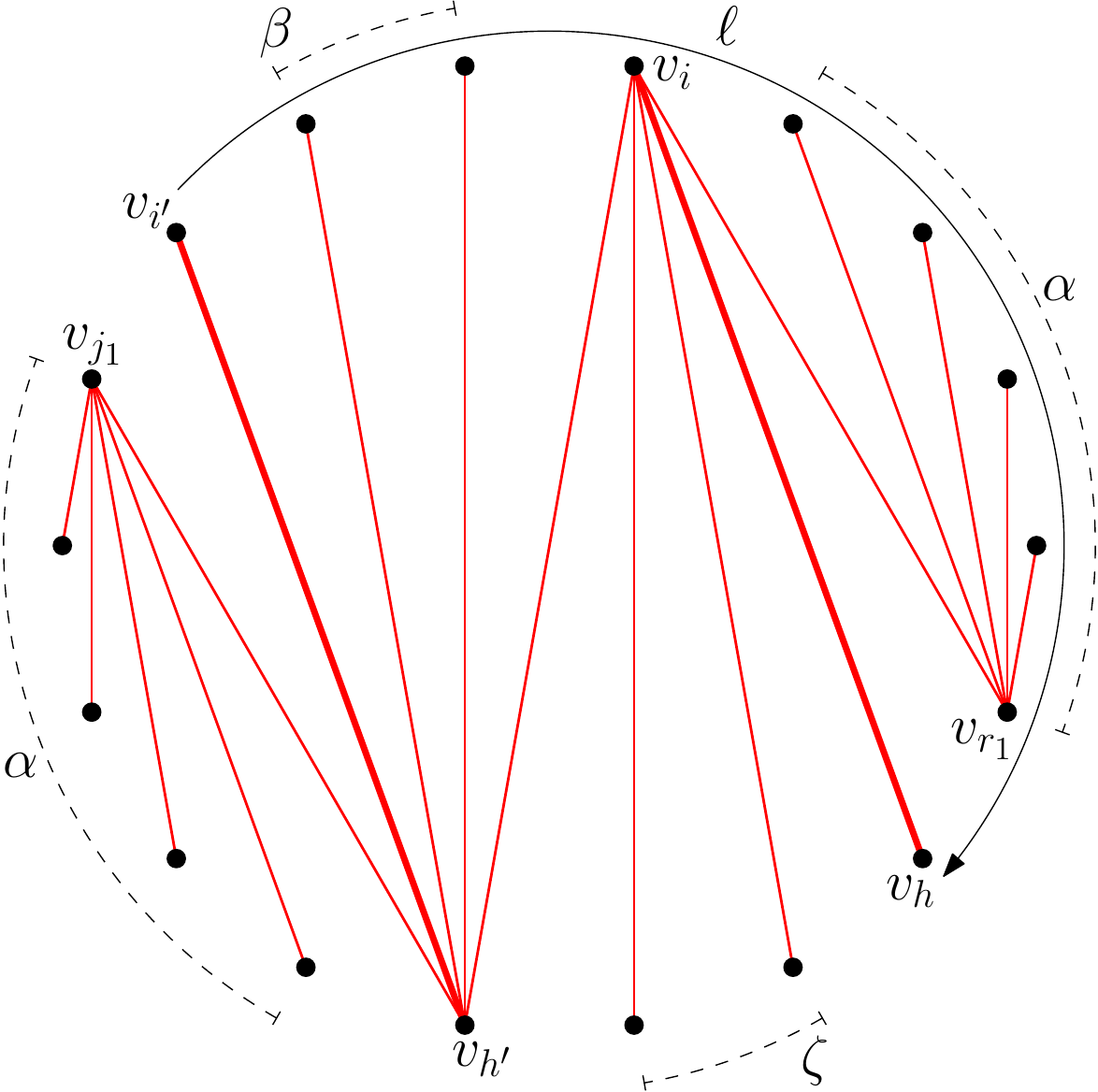}}
		\hfill
		\subfigure[]{\label{fi:overlapping-b}\includegraphics[width=0.32\columnwidth, page=2]{overlapping}}
		\hfill
		\subfigure[]{\label{fi:overlapping-c}\includegraphics[width=0.32\columnwidth, page=3]{overlapping}}
		\caption{\label{fi:overlapping} Illustration for the proof of \Cref{le:rotation}; (a) $\sigma$ even; (b)-(c) $\sigma$ odd.
		}
	\end{figure}

If $\sigma$ is odd, then $\beta - (\Delta-1) \leq \zeta \leq \beta+(\Delta-1)$ (see \Cref{fi:overlapping-b,fi:overlapping-c}) and therefore  $2\alpha+2\beta-(\Delta-1)+4 \leq 2\alpha+\beta+\zeta+4=n \leq 2\alpha+2\beta+(\Delta-1)+4$, which can be rewritten as $\frac{n-(\Delta-1)}{2} \leq \alpha+\beta+2 \leq \frac{n+(\Delta-1)}{2}$. It follows that, in order to have  $(v_{i'},v_{g'})$ and $(v_i,v_g)$ coincident after a rotation of $\ell$ steps, the value of $\ell$ must be such that $\frac{n-(\Delta-1)}{2} \leq \ell \leq \frac{n+(\Delta-1)}{2}$; but, when $\sigma$ is odd, we have $\ell=j_2-j_1<\frac{n-(\Delta-1)}{2}$ and therefore a multiple edge cannot exist.
\end{proof} 

We conclude this section by computing the maximum number of crossings per edge in the union of two zig-zag drawings without overlapping edges.  We state this lemma in general terms assuming that the two $\Delta$-regular caterpillars can have different vertex degrees, as we are going to use the lemma to establish upper bounds on $k$ both for $k$-planar $h$-placements and for $k$-planar $h$-packings (Section~\ref{se:packing}).

Let $\Gamma$ be a union of a set of zig-zag drawings. To ease the description that follows, we regard $\Gamma$ as a sub-drawing of a straight-line drawing of $K_n$ whose vertices coincide with those of $\Gamma$ (and therefore are equally spaced along a circle). In particular, for each vertex $v_j$, we denote by $e_{j,0}, e_{j,1},\dots, e_{j,n-2}$ the edges incident to $v_j$ in $K_n$ according to the circular counterclockwise order around $v_j$ starting from $e_{j,0}=(v_j,v_{j-1})$. Each of the zig-zag drawings that form $\Gamma$ contains a subset of these edges and $\Gamma$ is a valid packing if there is no edge that belongs to two different zig-zag drawings in the set whose union is $\Gamma$.

We denote by $\mathcal S_n$ the (circular) sequence of slopes  $s_i = i \cdot \frac{\pi}{n}$, for $i=0,1,\dots,n-1$; refer to \Cref{fi:slopes}. Notice that, without loss of generality, we can assume that the convex hull of $\Gamma$ has a side with slope $s_0$ and, as a consequence, every edge of $\Gamma$ has a slope in the set $\mathcal S_n$. Let $v_j$ be a vertex; if the slope of $e_{j,0}$ is $s_{i_j}$, then the slope of $e_{j,p}$ is $s_{{i_j}+p}$ (with indices taken modulo $n$); in other words, the edges incident to each vertex have slopes that form a sub-sequence of $n-1$ consecutive elements of $\mathcal S_n$; we denote such a sequence as $\psi(i_j)$, where $i_j$ indicates that the first element of $\psi(i_j)$ is $s_{i_j}$. We say that $v_j$ \emph{uses} the sequence $\psi(i_j)$. If we consider two different vertices $v_{j}$ and $v_{j+p}$ and $v_j$ uses the sequence $\psi(i_j)$, then $v_{j+p}$ uses the sequence $\psi(i_j-2p)$ (with indices taken modulo $n$); in other words, the sequence used by a vertex shifts clockwise by two elements moving to the next vertex.

\begin{figure}[tb]
	\centering
	\includegraphics[width=0.7\textwidth, page=3]{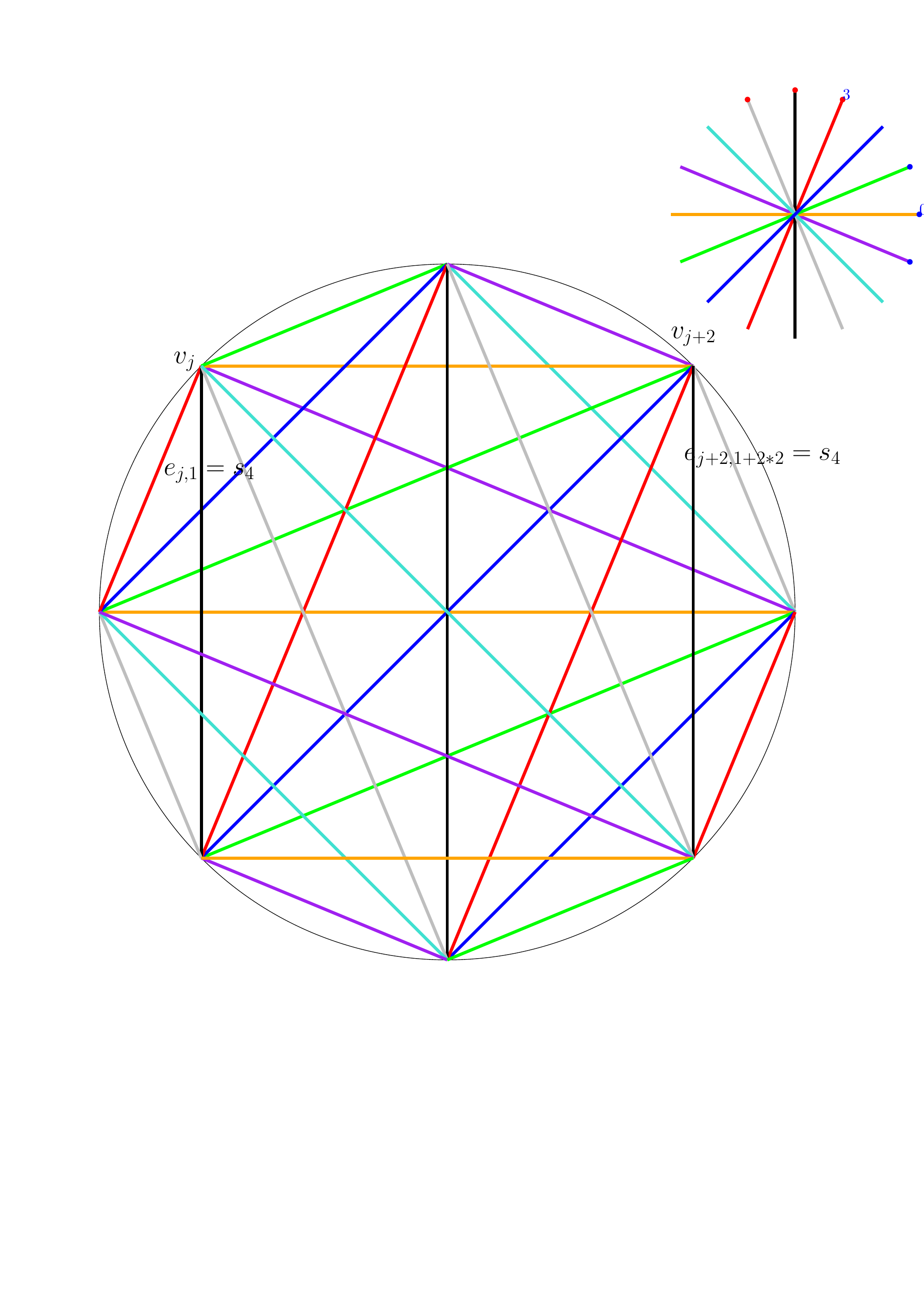}
	\caption{Illustration for the definition of slopes.}\label{fi:slopes} 
\end{figure}

\begin{lemma}\label{le:crossings}
    Let $C_1$ be an $n$-vertex $\Delta_1$-regular caterpillar and let $C_2$ be an $n$-vertex $\Delta_2$-regular caterpillar with $\Delta_i\leq n-2$ (for $i=1,2$). Let $\Gamma_1$ be a zig-zag drawing of $C_1$ with starting point $v_{j_1}$ and let $\Gamma_2$ be a zig-zag drawing of $C_2$ with starting point $v_{j_2}$ with $0 < j_2-j_1 < \frac{n}{2}$. If $\Gamma_1 \cup \Gamma_2$ has no multiple edges, then any edge of $\Gamma_1 \cup \Gamma_2$  is crossed at most $2(\Delta_1+\Delta_2)+4(j_2-j_1)$ times.
\end{lemma}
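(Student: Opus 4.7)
The plan is to fix an arbitrary edge $e \in \Gamma_1 \cup \Gamma_2$ and count the number of other edges in the union that cross $e$, exploiting the slope framework set up just before the lemma together with the combinatorial structure of zig-zag drawings.

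First I would establish a structural property of a zig-zag drawing: at every vertex $v_c$, the set of edges of $\Gamma_i$ incident to $v_c$ occupies a contiguous sub-sequence of positions in the slope sequence $\psi(i_c)$, of length $\deg_{C_i}(v_c) \leq \Delta_i$. This follows directly from the construction, since the neighbors of an upper vertex $v_c$ in a zig-zag drawing form a contiguous interval of lower-part vertices (and symmetrically for lower vertices). Moreover, because $\Gamma_2$ is obtained from $\Gamma_1$ by a rigid rotation of vertex labels by $\ell := j_2 - j_1$ steps, and the slope sequence shifts clockwise by two positions between consecutive vertices, the contiguous $\Gamma_2$-block at $v_c$ is offset from the contiguous $\Gamma_1$-block at $v_c$ by exactly $2\ell$ slot positions.

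Next I would apply the standard chord-interleaving characterization of crossings in a convex-position drawing: an edge $(v_c, v_d)$ crosses $e = (v_a, v_b)$ iff exactly one of $v_c, v_d$ lies in the open arc from $v_a$ to $v_b$. Concretely, for each vertex $v_c$ in one of the two arcs cut by $e$, I would count the incident edges of $\Gamma_1 \cup \Gamma_2$ whose other endpoint lies in the opposite arc; because the edges at $v_c$ form (at most) two contiguous slot-blocks, this count is the size of the union of two sub-intervals of slot positions, one coming from the $\Gamma_1$-block and one from the $\Gamma_2$-block. Summing these counts over all vertices $v_c$ in the arc yields the total number of crossings on $e$.

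The main technical obstacle is performing this sum. My expectation is that, since the contiguous blocks at $v_c$ slide rigidly by two slot positions per vertex while the arcs induced by $e$ stay fixed, nearly all intermediate contributions telescope away. What remains are two kinds of terms: boundary contributions at the two ends of each $\Gamma_i$-block, which contribute at most $2\Delta_i$ crossings in total (one $\Delta_i$ from each side of $e$); and the mismatch between the $\Gamma_1$- and $\Gamma_2$-blocks, which contributes at most $4\ell$ because the offset of $2\ell$ between blocks must be paid on both arcs cut by $e$ and at both block boundaries. I would handle the cases $e \in \Gamma_1$ and $e \in \Gamma_2$ by symmetry, and treat carefully the parity subtleties arising from whether $\sigma_1$ and $\sigma_2$ are even or odd, as these control the precise sizes of the upper and lower parts of each zig-zag drawing and therefore the endpoints of the telescoping sum.
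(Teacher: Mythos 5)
Your setup coincides with the paper's: you use the slope sequences $\psi(\cdot)$, the fact that the edges at each vertex occupy a contiguous block of slots, the clockwise offset of $2(j_2-j_1)$ between the two drawings, and the chord-interleaving criterion for crossings. But the proof stops exactly where the work begins. The entire bound rests on the sentence ``my expectation is that nearly all intermediate contributions telescope away,'' and nothing substantiates it. The paper's actual argument is not a telescoping sum over all vertices of an arc; it has two concrete ingredients that your sketch is missing. First, a \emph{localization} step: comparing the last slope used by $\Gamma_1$ (namely $s_{j_1+\Delta_1}$) with the first slope used by $\Gamma_2$ (namely $s_{j_1-2(j_2-j_1)}$) shows that every $\Gamma_2$-edge crossing $e$ must have an endpoint in a set $V_l$ of at most $\Delta_1+2(j_2-j_1)$ \emph{consecutive} vertices hugging one endpoint of $e$; vertices deeper in the arc contribute nothing, so there is no long sum to telescope. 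Second, a \emph{degree-sum} step: a naive bound of $\Delta_2$ edges per vertex of $V_l$ would give $\Delta_2(\Delta_1+2(j_2-j_1))$, which is far too large; the paper instead partitions $V_l$ into blocks (a spine vertex together with its following leaves), notes that each block of $\Delta_2-1$ vertices carries only $2(\Delta_2-1)$ incident $\Gamma_2$-edges, and hence gets $2|V_l|+2\Delta_2$. Your proposed accounting (``$2\Delta_i$ from boundaries, $4\ell$ from the mismatch'') lands on the right final expression, but no mechanism in your sketch produces it; in particular the factor-of-$\Delta_2$ saving from the average-degree-$2$ observation is nowhere in your argument.

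A secondary but genuine error: you justify the $2\ell$ offset by saying $\Gamma_2$ is obtained from $\Gamma_1$ ``by a rigid rotation of vertex labels by $\ell$ steps.'' That only makes sense when $\Delta_1=\Delta_2$; the lemma is deliberately stated for caterpillars of different degrees (it is reused for packing in Section~4), so $\Gamma_2$ is generally not a rotation of $\Gamma_1$. The correct statement, which the paper uses, is only that the \emph{starting} slope of $\Gamma_2$'s slope range is shifted clockwise by $2(j_2-j_1)$ relative to that of $\Gamma_1$, while the two ranges have different lengths $\Delta_1$ and $\Delta_2$. Finally, the parity of $\sigma_1,\sigma_2$ that you plan to ``treat carefully'' plays no role in this crossing count; it matters for the multiple-edge lemmas, not here.
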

\begin{proof}
    We first observe that the edges of a zig-zag drawing of a $\Delta$-regular caterpillar are all drawn as segments whose slope belongs to a set of $\Delta$ slopes. In particular, for every spine vertex $v$, the edges incident to $v$ are drawn using all these $\Delta$ slopes. 

    Consider the starting vertex $v_{j_1}$ of $\Gamma_1$; the edges incident to $v_{j_1}$ are drawn with the first $\Delta_1$ slopes of $\psi(i_{j_1})$. Analogously, the edges incident to the starting vertex $v_{j_2}$ of $\Gamma_2$ are drawn with the first $\Delta_2$ slopes of $\psi(i_{j_2})$. The sequence $\psi(i_{j_2})$ is shifted clockwise by $2(j_2-j_1)$ units with respect to $\psi(i_{j_1})$. On the other hand, since $j_2-j_1 < \frac{n}{2}$, the first slope of $\psi(i_{j_2})$ is distinct from the first slope of $\psi(i_{j_1})$.

    \begin{figure}[tbp]
		\centering
    	\subfigure[]{\label{fi:crossings-4cases-a}\includegraphics[width=0.45\columnwidth, page=1]{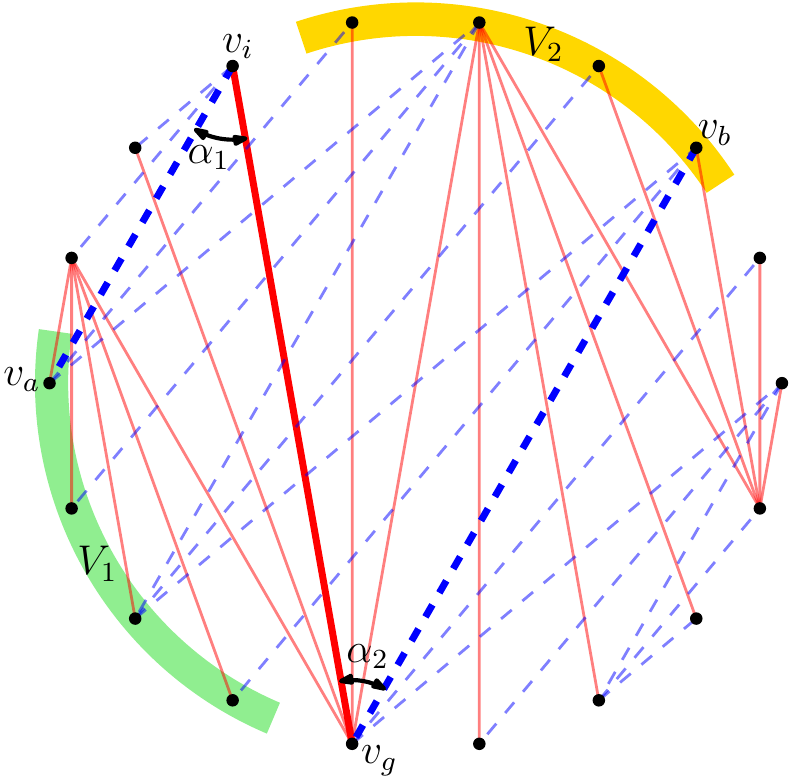}}
		\hfill
		\subfigure[]{\label{fi:crossings-4cases-b}\includegraphics[width=0.45\columnwidth, page=2]{crossings-4cases}}
		\hfill
		\subfigure[]{\label{fi:crossings-4cases-c}\includegraphics[width=0.45\columnwidth, page=3]{crossings-4cases}}
		\hfill
		\subfigure[]{\label{fi:crossings-4cases-d}\includegraphics[width=0.45\columnwidth, page=4]{crossings-4cases}}\caption{\label{fi:crossings-4cases} Illustration for the proof of \Cref{le:crossings}. The edges of $\Gamma_2$ are dashed. 
		}
    \end{figure}
    
    Let $e=(v_i,v_g)$ be an edge of $\Gamma_1$. We now prove that the number of crossings along $e$ is at most the one given in the statement. Let $e_1=(v_i,v_a)$ be the edge of $\Gamma_2$ incident to $v_i$ that forms the smallest angle with $e$; analogously, let $e_2=(v_g,v_b)$ be the edge of $\Gamma_2$ incident to $v_g$ that forms the smallest angle with $e$. Notice that, in principle there are four possible clockwise orders of $v_i$, $v_a$, $v_g$, and $v_b$
    (see cases (a)--(d) in \Cref{fi:crossings-4cases} for an illustration).  However the case (b) cannot happen. Namely, in case (b) the slopes used to draw the edges of $\Gamma_2$ would be shifted counterclockwise with respect to those used to represent the edges of $\Gamma_1$; but, as observed above, the slopes used by $\Gamma_2$ are shifted clockwise with respect to those used by $\Gamma_1$.  
    
    Let $\alpha_1$ be the angle between $e$ and $e_1$ and let $\alpha_2$ be the angle between $e$ and $e_2$. Let $V_1$ be the set of vertices seen by the angle $\alpha_1$ including $v_a$ and excluding $v_g$; analogously let $V_2$ be the set of vertices seen by the angle $\alpha_2$ including $v_b$ and excluding $v_i$. In each of the three cases (a), (c), and (d), at least one of $\alpha_1$ and $\alpha_2$ is such that $e$ sweeps the angle moving clockwise. Let $\alpha_l$ with $l \in \{1,2\}$ be the angle that satisfies this condition.  In particular, for case (a) $\alpha_l$ can be both $\alpha_1$ or $\alpha_2$, in case (c) $\alpha_l$ is $\alpha_2$ and in case (d) $\alpha_l$ is $\alpha_1$ (see \Cref{fi:crossings-4cases}). Every edge that crosses $e$ has an end-vertex in $V_1$ and one end-vertex in $V_2$. To count the number of such edges (and therefore the number of crossings along $e$), we evaluate $|V_l|$. The value of $|V_l|$ is at most the number of slopes of $\mathcal S_n$ that are encountered in counterclockwise order between the slope $s \in \mathcal{S}_n$ of $e_l$ and the slope $s' \in \mathcal{S}_n$ of $e$. In particular, in case (a) $|V_l|$ is exactly this number, while in case (c) and (d) $|V_l|$ is less than this number.    
    The slope $s'$ is at most the last slope used by $\Gamma_1$, which is $s_{p}$ with $p=j_1+\Delta_1$, while the slope $s$ is at least the first slope used by $\Gamma_2$, which is $s_{q}$ with $q=j_1-2(j_2-j_1)$. Thus, the number of slopes between $s'$ (included) and $s$ (excluded) is at most $p-q=\Delta_1+2(j_2-j_1)$. Hence $|V_l| \leq \Delta_1+2(j_2-j_1)$. 
    
    We call a \emph{block} a subset of consecutive vertices of $V_l$ starting with a spine vertex and containing all the leaves that follow that spine vertex. The number of edges of $\Gamma_2$ incident to the vertices of a block is $2(\Delta_2-1)$ (since $\Delta_2$ edges are incident to the spine vertex and $\Delta_2-2$ is the number of leaves). The number of blocks in $V_l$ is $\left \lceil \frac{|V_l|}{(\Delta_2-1)} \right\rceil$. It follows that the number of crossings $\chi_e$ along $e$ is at most $\left \lceil \frac{|V_l|}{(\Delta_2-1)} \right\rceil 2(\Delta_2-1)$ which is less than $2(|V_l|+\Delta_2)$. Since $|V_l|\leq \Delta
    _1+2(j_2-j_1)$, we have $\chi_e \leq 2(\Delta_1+\Delta_2)+4(j_2-j_1)$, which concludes the proof in the case when $e$ belongs to $\Gamma_1$. The case when the edge $e$ belongs to $\Gamma_2$ is analogous. In particular, when $e$ belongs to $\Gamma_2$, the cases (b), (c), and (d) apply, while case (a)~does~not~happen.  
\end{proof}

\subsection{Characterization}\label{sse:characterization}

We are now ready to characterize the $\Delta$-regular caterpillars that admit an $h$-placement. 

\begin{theorem}\label{th:characterization}
   Let $C$ be a $\Delta$-regular caterpillar with $n$ vertices. An $h$-placement of $C$ exists if and only if: (i) $\Delta \leq n-h$; and (ii) $n \geq 2h+(\Delta-1)\cdot(\sigma \mod 2)$, where $\sigma$ is the number of spine vertices of $C$. Further, if an $h$-placement exists, there exists one that is $k$-planar for $k \in O(\Delta h+h^2)$.
\end{theorem}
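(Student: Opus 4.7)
My plan is to prove the theorem in three parts: necessity of conditions~(i) and~(ii), sufficiency via an explicit construction, and the crossings-per-edge bound. The high-level strategy for sufficiency is to place $n$ points in convex position on a common circle at equal spacing and superimpose $h$ zig-zag drawings of $C$ with staggered starting points $v_1,v_2,\dots,v_h$, and then invoke Lemmas~\ref{le:rotation} and~\ref{le:crossings}.

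For \emph{necessity}, condition~(i) is immediate from Property~\ref{prop:necessary} applied to any spine vertex, which has degree $\Delta$. When $\sigma$ is even, condition~(ii) reduces to $n\ge 2h$, which is also part of Property~\ref{prop:necessary}. The interesting case is $\sigma$ odd, where I will rule out $2h\le n<2h+\Delta-1$ by degree counting. Writing $n=2h+t$ with $0\le t<\Delta-1$, in any putative placement let $c_v$ be the number of spine-vertex images mapped to host vertex $v$. Then $\deg_G(v)=c_v(\Delta-1)+h\le n-1$, so $c_v\le M:=\lfloor (n-1-h)/(\Delta-1)\rfloor$, and $\sum_v c_v=h\sigma$ forces $nM\ge h\sigma$. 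A direct calculation yields
\[
h\sigma - nM \;=\; \frac{(2h+t)\,r - t(2h+t-1)}{\Delta-1},\qquad r:=(n-1-h)\bmod (\Delta-1),
\]
so infeasibility follows from $r\ge t$ (strictly $r>0$ when $t=0$). Combining $\sigma$ odd with $\sigma(\Delta-1)=n-2=2(h-1)+t$ and writing $h-1=m(\Delta-1)+q$ with $0\le q<\Delta-1$, the parity of $(2q+t)/(\Delta-1)$ is odd and bounded by $3$, forcing $2q+t=\Delta-1$; hence $r=\Delta-1-q=t+q\ge t$, closing the contradiction (the corner case $\Delta=2$ is degenerate and handled separately since $\sigma$ odd with $n\ge 2h$ already implies $n\ge 2h+1$).

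For \emph{sufficiency and the crossing bound}, given~(i) and~(ii), I take the $h$ zig-zag drawings $\Gamma_1,\dots,\Gamma_h$ of $C$ on $n$ equally spaced points of a common circle, where $\Gamma_i$ has starting point $v_i$ (so $\Gamma_i$ is the rotation of $\Gamma_1$ by $i-1$ steps). Condition~(ii) gives $\frac{n-(\Delta-1)(\sigma\bmod 2)}{2}\ge h$, so for every pair $i'<i$ we have $0<i-i'\le h-1<\frac{n-(\Delta-1)(\sigma\bmod 2)}{2}$, and Lemma~\ref{le:rotation} guarantees that $\Gamma_{i'}\cup\Gamma_i$ has no multiple edges. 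Hence $\bigcup_i\Gamma_i$ realizes a valid $h$-placement of $C$. To bound the crossings, fix an edge $e\in\Gamma_i$. Lemma~\ref{le:crossings}, applied to each pair $(\Gamma_i,\Gamma_j)$ with $\Delta_1=\Delta_2=\Delta$, bounds the crossings on $e$ from edges in $\Gamma_i\cup\Gamma_j$ by $4\Delta+4|j-i|$. Choosing $|j-i|=1$ already bounds the self-crossings of $e$ inside $\Gamma_i$ by $4\Delta+4$; for each other $j\ne i$, the crossings of $e$ with edges of $\Gamma_j$ are at most $4\Delta+4|j-i|$. Summing over $j\ne i$ and using $\sum_{j\ne i}|j-i|\le h(h-1)/2$ gives at most $4\Delta h+4+2h(h-1)\in O(\Delta h+h^2)$ crossings on $e$, as required.

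The \emph{main obstacle} is the necessity of~(ii) when $\sigma$ is odd: relaxing the degree inequality to the reals recovers only $n\ge 2h$ (the easy half of (ii)), so the extra $\Delta-1$ slack must be extracted from the integer/parity constraints imposed jointly by $\sigma$ odd and $\sigma(\Delta-1)=n-2$. Once this residue analysis is done, the sufficiency proof and the crossing bound are straightforward applications of the structural Lemmas~\ref{le:rotation} and~\ref{le:crossings} with the minimal rotation step $\ell=1$.
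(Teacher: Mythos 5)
Your proof is correct and follows essentially the same route as the paper: the same zig-zag construction with unit rotations together with Lemmas~\ref{le:rotation} and~\ref{le:crossings} for sufficiency and the crossing bound, and the same degree-counting/pigeonhole argument for the necessity of (ii) when $\sigma$ is odd. The only difference is in bookkeeping: where the paper bounds the maximum number $\beta$ of spine images on a single host vertex and derives a ceiling-function contradiction, you sum $c_v$ over all host vertices and make the integrality obstruction explicit via the residue $r=(n-1-h)\bmod(\Delta-1)$; the two computations are equivalent.
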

\begin{proof}
We first prove the sufficient condition. 
Let $C_1, C_2, \dots, C_h$ be the $h$ caterpillars and assume that $n\geq 2h+(\Delta-1)(\sigma \mod 2)$. 
We compute an $h$-placement of $C_1, C_2,\dots,C_h$ starting from a zig-zag drawing $\Gamma_1$ of $C_1$ and obtaining the drawing $\Gamma_i$ of $C_i$ by rotating $\Gamma_1$ by $i-1$ steps, for $i=2,3,\dots, h$. 

Notice that, when the number of spine vertices $\sigma$ of each $C_i$ is even, $h \leq \frac{n}{2}$ and therefore each $\Gamma_i$ is rotated by less than $\frac{n}{2}$ steps; when $\sigma$ is odd $h \leq \frac{n-(\Delta-1)}{2}$ and each $\Gamma_i$ is rotated by less than $\frac{n-(\Delta-1)}{2}$ steps. In both cases, each pair of drawings $\Gamma_i$ and $\Gamma_j$ satisfies the conditions of \Cref{le:rotation} and therefore there are no multiple edges, that is, the union of all $\Gamma_i$ is a valid $h$-placement of $C_1, C_2, \dots, C_h$. 

We now prove the necessary condition. If $\sigma$ is even, then conditions (i) and (ii) are necessary by \Cref{prop:necessary}. Hence, consider the case when $\sigma$ is odd. Condition (i) is necessary by \Cref{prop:necessary}. Assume, by contradiction, that (ii) is not necessary, i.e., there exists an $h$-placement of $h$ caterpillars $C_1, C_2, \dots, C_h$ such that $n<2h+(\Delta-1)$. Since $C_1, C_2, \dots, C_h$ admit an $h$-placement, by \Cref{prop:necessary} $n$ must be at least $2h$. Thus, it would be $2h \leq n < 2h+(\Delta-1)$; in other words, $n=2h+\alpha$ with $0 \leq \alpha \leq \Delta-2$. 

Let $G$ be the host graph of the $h$-placement and let $v$ be the vertex of $G$ to which the largest number of spine vertices of $C_1, C_2, \dots, C_h$ is mapped. Let $\beta$ be the number of spine vertices that are mapped to $v$. There are other $h-\beta$ leaf vertices that are mapped to $v$ (because one vertex per caterpillar has to be mapped on each vertex of $G$). The degree of $v$ in $G$ is at most $n-1$ and each of the spine vertices mapped to $v$ has degree $\Delta$. Hence, the $\beta$ spine vertices mapped to $v$ have degree $\beta \Delta$ in total. Vertex $v$ can have  at most other $n-1-\beta\Delta$ edges and therefore it must be $n-1-\beta\Delta \geq h-\beta$, i.e., $\beta \leq \frac{n-1-h}{\Delta-1}$.
On the other hand, there are $\sigma h$ spine vertices in total and, since $G$ has $n$ vertices, there are at least $\lceil \frac{\sigma h}{n} \rceil$ spine vertices mapped to $v$, i.e.,
$\beta \geq \lceil \frac{\sigma h}{n} \rceil$.
Putting together the two conditions on $\beta$ we obtain:

\begin{equation*}\label{eq:x}
    \left\lceil \frac{\sigma h}{n} \right\rceil \leq \beta \leq \frac{n-1-h}{\Delta-1}. 
\end{equation*}

Since $n=2h+\alpha$, we have $h=\frac{n-\alpha}{2}$; replacing $h$ in \Cref{eq:x}, we obtain:
\begin{equation*}\label{eq:x1}
    \left\lceil \frac{\sigma}{2} - \frac{\sigma\alpha}{2n}\right \rceil \leq \beta \leq \frac{n+\alpha-2}{2(\Delta-1)}. 
\end{equation*}
Since $n=\sigma(\Delta-1)+2$, we have:
\begin{equation}\label{eq:x2}
    \left \lceil \frac{\sigma}{2} - \frac{\sigma\alpha}{2(\sigma(\Delta-1)+2)}\right \rceil \leq \beta \leq \frac{\sigma(\Delta-1)+\alpha}{2(\Delta-1)}. 
\end{equation}
\Cref{eq:x2} implies that:
\begin{equation}\label{eq:x3}
    \left \lceil \frac{\sigma}{2} - \frac{\alpha}{2(\Delta-1)+\frac{4}{\sigma}}\right \rceil \leq \frac{\sigma}{2}+\frac{\alpha}{2(\Delta-1)}. 
\end{equation}

We now prove that \Cref{eq:x3} cannot be satisfied. Since $\sigma$ is odd, it is $\sigma=2i+1$ for some $i \in \mathbb{N}$, and thus:
\begin{equation}\label{eq:x4}
    \left\lceil i + \frac{1}{2}-\zeta \right\rceil\leq k + \frac{1}{2}+\zeta', 
\end{equation}
with $\zeta=\frac{\alpha}{2(\Delta-1)+\frac{4}{\sigma}}$ and $\zeta'=\frac{\alpha}{2(\Delta-1)}$. We have $\zeta < \zeta'$ and we prove that $\zeta' < \frac{1}{2}$:
\begin{equation*}
    \zeta'=\frac{\alpha}{2(\Delta-1)}\leq \frac{\Delta-2}{2(\Delta-1)}<\frac{\Delta-1}{2(\Delta-1)}=\frac{1}{2}.
\end{equation*}
The first term of \Cref{eq:x4} is $i+1$ because $0<\frac{1}{2}-\zeta<1$; the second term is less than $i+1$ because $0<\frac{1}{2}+\zeta'<1$. It follows that \Cref{eq:x4} does not hold and therefore \Cref{eq:x3} does not hold.

\smallskip

We now prove the bound on the number of crossings along an edge. We consider an edge of $\Gamma_1$; the number of crossings along an edge of the drawing of another caterpillar is bounded by the same number. Let $e$ be an edge of the drawing $\Gamma_1$. By \Cref{le:crossings}, the number of crossings $\chi_e$ along $e$ due to the edges of another drawing $\Gamma_l$ (with $2 \leq l \leq h$) is at most $2(\Delta_1+\Delta_l)+4(j_l-j_1)$. Summing over all drawings distinct from $\Gamma_1$, we obtain $\chi_e \leq \sum_{l=2}^h(2(\Delta_1+\Delta_l)+4(j_l-j_1))$. Considering that $\Delta_l=\Delta$ for every $l$ and that $j_l-j_1=l-1$, we have 
\begin{equation}\label{eq:crossings}
 \chi_e \leq \sum_{l=2}^h(4\Delta+4(l-1))\leq (4\Delta-2)h+2h^2-4\Delta.   
\end{equation}

\end{proof}

\begin{figure}[tbp]
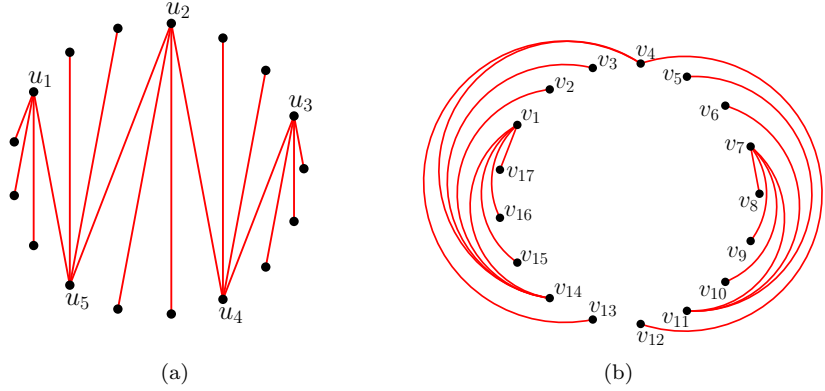

	\centering
    \subfigure[]{\label{fi:zig-zag-drawing-inner}\includegraphics[width=0.44\columnwidth, page=5]{zig-zag-drawing}}
	\hfil
	  \subfigure[]{\label{fi:zig-zag-drawing-outer}\includegraphics[width=0.44\columnwidth, page=3]{zig-zag-drawing}}
	\caption{\label{fi:zig-zag-drawing-inner-outer} (a) An inner zig-zag drawing and (d) an outer zig-zag drawing of a $4$-regular caterpillar.
	}
\end{figure}

We conclude by observing that the number of crossings given by \Cref{eq:crossings} can be reduced, although not asymptotically. A zig-zag drawing can be embedded inside the circle (see \Cref{fi:zig-zag-drawing-inner}) or outside the circle (see \Cref{fi:zig-zag-drawing-outer}). Thus, the number given by \Cref{eq:crossings} can be halved by embedding half of the caterpillars inside the circle and the other half outside the circle.

\section{\texorpdfstring{$h$}--packing of \texorpdfstring{$\Delta$}--regular Caterpillars in \texorpdfstring{$k$}--planar Graphs}\label{se:packing}

In this section we study $h$-packings of $h$ $\Delta_1$-, $\Delta_2$-, $\dots$, $\Delta_h$-regular caterpillars
 such that the degree $\Delta_i$ and the degree $\Delta_j$ of the spine vertices can differ from one caterpillar to another, for  $1 \leq i,j \leq h$, $i\neq j$. 
 
 \begin{lemma}\label{le:rotation-3}
    Let $C_1$ be an $n$-vertex $\Delta_1$-regular caterpillar and let $C_2$ be an $n$-vertex $\Delta_2$-regular caterpillar such that $\Delta_1 > \Delta_2$ and $\Delta_1 \leq n-2$. Let $\Gamma_1$ be a zig-zag drawing of $C_1$ with starting point $v_{j_1}$ and ending point $v_{r_1}$, and let $\Gamma_2$ be a zig-zag drawing of $C_2$ with starting point $v_{j_2}$ and ending point $v_{r_2}$. If $\frac{\Delta_2}{2} \leq j_2-j_1 < \frac{n-(\Delta_1-1)}{2}$, 
    then $\Gamma_1 \cup \Gamma_2$ has no multiple edges.
\end{lemma}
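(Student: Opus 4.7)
The plan is to rerun the slope analysis that underlies \Cref{le:crossings}, rather than the rigid-rotation argument of \Cref{le:rotation}: because $C_1$ and $C_2$ have different spine degrees there is no rotation taking $\Gamma_1$ onto $\Gamma_2$, but it is still true that every edge of a zig-zag drawing of a $\Delta$-regular caterpillar carries a slope from a block of $\Delta$ consecutive elements of the cyclic sequence $\mathcal{S}_n$. Two straight chords through equally spaced points of a circle can coincide as unordered vertex pairs only if they have the same slope, so it suffices to show that the slope block used by $\Gamma_1$ and the slope block used by $\Gamma_2$ are disjoint in $\mathcal{S}_n$.

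To pin down the two blocks, I would normalize by placing the first slope $s_{i_{j_1}}$ of $\Gamma_1$ at index $0$; the block of $\Gamma_1$ is then $\{0, 1, \ldots, \Delta_1-1\}$. Using the fact recalled in the preamble of \Cref{le:crossings} that the slope sequence attached to the starting vertex shifts clockwise by two per step along the circle, the starting vertex $v_{j_2}$ of $\Gamma_2$ uses the slope block $\{-2\ell, -2\ell+1, \ldots, -2\ell+\Delta_2-1\}$ modulo $n$, where $\ell = j_2-j_1$. The lower-bound hypothesis $\ell \geq \Delta_2/2$, i.e., $2\ell \geq \Delta_2$, says that the reduction of this block mod $n$ sits inside $\{n-2\ell, n-2\ell+1, \ldots, n-2\ell+\Delta_2-1\} \subseteq \{0, 1, \ldots, n-1\}$ without wrapping around. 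The upper-bound hypothesis $\ell < \tfrac{n-(\Delta_1-1)}{2}$ rewrites as $\Delta_1-1 < n-2\ell$, which is exactly the statement that the last index of $\Gamma_1$'s block is strictly smaller than the first index of $\Gamma_2$'s block. Combining the two inequalities, the two blocks are disjoint subsets of $\mathbb{Z}_n$.

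Since no edge of $\Gamma_1$ shares its slope with any edge of $\Gamma_2$, the two drawings have no common edge, which is exactly the claim. The main technical nuisance is the integer-vs.\ half-integer bookkeeping: one has to justify that the hypothesis $\Delta_2/2 \leq \ell$, once $\ell$ is known to be a positive integer, is the right integer translation of the strict slope inequality $2\ell > \Delta_2-1$ (checking the even and odd cases of $\Delta_2$ separately), and one has to use $\Delta_1 \leq n-2$ together with the upper bound on $\ell$ to certify that both slope blocks land inside a single fundamental domain of $\mathbb{Z}_n$ so that the disjointness inequality can actually be read off in $\{0,1,\ldots,n-1\}$ without a cyclic wrap hiding an overlap.
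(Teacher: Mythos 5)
Your proposal is correct and follows essentially the same route as the paper's own proof: both reduce the absence of multiple edges to the disjointness of the two slope blocks (the first $\Delta_1$ slopes of $\psi(i_{j_1})$ versus the first $\Delta_2$ slopes of $\psi(i_{j_2})$), with the lower bound $j_2-j_1\geq \Delta_2/2$ preventing the second block from wrapping back onto the first and the upper bound $j_2-j_1<\frac{n-(\Delta_1-1)}{2}$ keeping the blocks separated on the other side. Your index bookkeeping is in fact somewhat more explicit than the paper's rather terse phrasing, but the underlying argument is the same.
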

\begin{proof}
As described in the proof of \Cref{le:crossings}, the edges of a zig-zag drawing of a $\Delta$-regular caterpillar are all drawn as segments whose slope belongs to a set of $\Delta$ slopes. In particular, for every spine vertex $v$, the edges incident to $v$ are drawn using all these $\Delta$ slopes. Based on this observation, we show that the $\Delta_1$ slopes used to represent the edges of $\Gamma_1$ are distinct from the $\Delta_2$ slopes used to represent the edges of $\Gamma_2$. We use the same notation used in \Cref{th:characterization}.

Consider the staring vertex $v_{j_1}$ of $\Gamma_1$; the edges incident to $v_{j_1}$ are drawn with the first $\Delta_1$ slopes of $\psi(i_{j_1})$. Analogously, the edges incident to the starting vertex $v_{j_2}$ of $\Gamma_2$ are drawn with the first $\Delta_2$ slopes of $\psi(i_{j_2})$. Since $j_2-j_1 \geq \frac{\Delta_2}{2}$, the sequence $\psi(i_{j_2})$ is shifted clockwise by $\Delta_2$ units with respect to $\psi(i_{j_1})$. On the other hand, since $j_2-j_1 \leq \frac{n-(\Delta_1-1)}{2}$, the sequence of the first $\Delta_2$ slopes of $\psi(i_{j_2})$ does not overlap with the first $\Delta_1$ slopes of $\psi(i_{j_1})$, which concludes the proof.\end{proof}

\begin{theorem}\label{th:general-caterpillars}
    Let $C_1, C_2, \dots, C_h$ be $h$ caterpillars such that $C_i$ is $\Delta_i$-regular, for $1 \le i\le h$, and $\Delta_{h}\leq\Delta_{h-1}\leq \dots \leq \Delta_1 \leq n-h$. If $\sum_{i=1}^h \Delta_i \le n-1$ and $\sum_{i=2}^h \left\lceil\frac{\Delta_i}{2}\right\rceil< \frac{n-(\Delta_1-1)}{2}$, then there exists a $k$-planar packing with $k \in O(\Delta_1 h^2)$.
\end{theorem}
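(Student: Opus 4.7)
The plan is to extend the zig-zag construction of Section~\ref{sse:zig-zag} to caterpillars of different regularity by using staggered starting points whose gaps scale with the smaller degrees. Assume without loss of generality that $\Delta_1 \ge \Delta_2 \ge \dots \ge \Delta_h$. I set $j_1 = 1$ and, for $i=2,\dots,h$,
\[
j_i \;=\; 1 \;+\; \sum_{t=2}^i \left\lceil \frac{\Delta_t}{2}\right\rceil ,
\]
and I draw each $C_i$ as a zig-zag drawing $\Gamma_i$ on $n$ equally spaced points with starting point $v_{j_i}$. The candidate host graph is $\Gamma = \bigcup_{i=1}^h \Gamma_i$.

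The first step is to verify that $\Gamma$ has no multiple edges by pairing up the rotation lemmas. Fix $i<l$. The choice of offsets gives
\[
\left\lceil \frac{\Delta_l}{2}\right\rceil \;\le\; j_l-j_i \;=\; \sum_{t=i+1}^{l} \left\lceil \frac{\Delta_t}{2}\right\rceil \;\le\; \sum_{t=2}^{h} \left\lceil \frac{\Delta_t}{2}\right\rceil \;<\; \frac{n-(\Delta_1-1)}{2}\;\le\;\frac{n-(\Delta_i-1)}{2},
\]
where the penultimate inequality is the second hypothesis of the theorem and the last uses $\Delta_i\le\Delta_1$. When $\Delta_i>\Delta_l$, these are exactly the bounds of Lemma~\ref{le:rotation-3} (applied with $C_1:=C_i$, $C_2:=C_l$), so $\Gamma_i\cup\Gamma_l$ has no multiple edges. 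When $\Delta_i=\Delta_l$, the caterpillars are isomorphic and the same inequalities give $0<j_l-j_i<(n-(\Delta_i-1))/2 \le (n-(\Delta_i-1)(\sigma_i\bmod 2))/2$, so Lemma~\ref{le:rotation} again forbids multiple edges. Because this holds for every pair, $\Gamma$ is a valid $h$-packing of $C_1,\dots,C_h$.

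For the crossing bound, observe that the same estimates yield $0 < j_l-j_i<n/2$, so Lemma~\ref{le:crossings} applies to every pair $(\Gamma_i,\Gamma_l)$. For an edge $e$ of $\Gamma_i$, the number of crossings on $e$ due to $\Gamma_l$ is at most $2(\Delta_i+\Delta_l)+4|j_l-j_i|$. Summing over $l\neq i$ and using $\Delta_l\le\Delta_1$ together with $|j_l-j_i|\le \sum_{t=2}^{h}\lceil\Delta_t/2\rceil \le h\lceil\Delta_1/2\rceil$, the total number of crossings along $e$ is at most
\[
4(h-1)\Delta_1 \;+\; 4\,(h-1)\,h\left\lceil \frac{\Delta_1}{2}\right\rceil \;=\; O(\Delta_1 h^2),
\]
so $\Gamma$ is $k$-planar with $k \in O(\Delta_1 h^2)$.

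The main delicate point is coordinating all the pairwise conditions simultaneously: the lower bound $\lceil\Delta_l/2\rceil$ required by Lemma~\ref{le:rotation-3} is dictated by the \emph{smaller} degree of each pair, while the corresponding upper bound is dictated by the \emph{larger}, so na\"ive uniform spacing would fail. Sorting the $\Delta_i$'s in non-increasing order and letting the gaps telescope through the cumulative sum forces both sides of the double inequality simultaneously for every pair, while the hypothesis $\sum_{i=2}^h\lceil\Delta_i/2\rceil<(n-(\Delta_1-1))/2$ is exactly what ensures that the extremal pair $(1,h)$ still fits inside the Lemma~\ref{le:rotation-3} window; the condition $\sum_{i=1}^h \Delta_i \le n-1$ is the Property~\ref{prop:necessary} degree-sum condition and guarantees that the $\Delta_i$-slope windows the drawings use can coexist in the global slope set $\mathcal{S}_n$.
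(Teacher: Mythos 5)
Your proof is correct and follows essentially the same route as the paper: the same staggered starting points $j_i = j_{i-1} + \left\lceil \Delta_i/2\right\rceil$, the same use of Lemma~\ref{le:rotation-3} to exclude multiple edges pairwise, and the same application of Lemma~\ref{le:crossings} to bound the crossings per edge. Your explicit fallback to Lemma~\ref{le:rotation} for pairs with $\Delta_i=\Delta_l$ is a small but welcome extra care, since Lemma~\ref{le:rotation-3} is stated only for strictly unequal degrees, a point the paper's proof glosses over.
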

\begin{proof} We compute a zig-zag drawing of $C_1$ with starting point $v_{j_1}$, with $j_1=1$; for each $C_i$, with $2 \leq i \leq h$, we compute a zig-zag drawing $\Gamma_i$ with starting vertex $v_{j_i}$ where $j_i=j_{i-1}+\left\lceil \frac{\Delta_i}{2} \right \rceil$. Notice that, each vertex $v$ of $\Gamma_1 \cup \Gamma_2 \cup \dots \cup \Gamma_h$ has degree at most $n-1$; namely $\sum_{i=1}^{h}\deg_{C_i}(v)\leq \sum_{i=1}^h \Delta_i \le n-1$. Moreover, given two caterpillars $C_{i}$ and $C_{i'}$ with $1 \leq i < i' \le h$, we have that: (i) $j_{i'}-j_i \geq j_{i'}-j_{i'-1}=\left\lceil\frac{\Delta_{i'}}{2}\right\rceil$; and (ii) $j_{i'}-j_i\leq j_h-j_1=\sum_{i=2}^{h}\lceil\frac{\Delta_i}{2}\rceil$, which gives $j_{i'}-j_i < \frac{n-(\Delta_1-1)}{2}<\frac{n-(\Delta_{i}-1)}{2}$. Putting together (i) and (ii), we obtain  $\frac{\Delta_{i'}}{2}<j_{i'}-j_i<\frac{n-(\Delta_i-1)}{2}$. Hence, \Cref{le:rotation-3} holds for every pair of caterpillars and the union of all the zig-zag drawings $\Gamma_1, \Gamma_2, \dots, \Gamma_h$ is a valid packing of $C_1, C_2, \dots, C_h$.

We now prove the bound on the number of crossings along an edge. We consider an edge of $\Gamma_1$; the number of crossings along an edge of another drawing is bounded by the same number. Let $e$ be an edge of the drawing $\Gamma_1$. By \Cref{le:crossings}, the number of crossings $\chi_e$ along $e$ due to the edges of another drawing $\Gamma_l$ (with $2 \leq l \leq h$) is at most $2(\Delta_1+\Delta_l)+4(j_l-j_1)$. Summing over all drawings distinct form $\Gamma_1$, we obtain $\chi_e \leq \sum_{l=2}^h(2(\Delta_1+\Delta_l)+4(j_l-j_1))$. Considering that $j_l \geq j_{l-1}+\left\lceil\frac{\Delta_i}{2}\right\rceil$, we obtain that $j_l-j_1=\sum_{i=2}^{l}\left\lceil\frac{\Delta_i}{2}\right\rceil$. Since $\Delta_l \leq \Delta_1$ for every $2 \leq l \leq h$, we have $j_l-j_1 \leq(l-1)(\frac{\Delta_1}{2}+1)$. Therefore, we obtain $\chi_e \leq \sum_{l=2}^h(4\Delta_1+4(l-1)(\frac{\Delta_1}{2}+1))\leq (\Delta_1+2)h^2+4\Delta_1(h-1)$.
\end{proof}

We now consider a special case of packing a set of $h$ $\Delta_1$-, $\Delta_2$-, $\dots$, $\Delta_h$-regular caterpillars where, for each $\Delta_i$ $(1\leq i \leq h$), we have that $\Delta_i-1$ is a multiple of $\Delta_{i+1}-1$. In this case, we show that the sufficient conditions of \Cref{th:general-caterpillars} can be relaxed.  For example, consider the packing of a $17$-regular caterpillar and two $9$-regular caterpillars, each having $34$ vertices. These three caterpillars do not satisfy the sufficient condition of \Cref{th:general-caterpillars}. However, a $k$-planar packing of these caterpillars is possible, as proven in \Cref{th:upper-regular-2}.
We start with the following property, which immediately follows from the construction of a zig-zag drawing (see also \Cref{fi:spine-indices} for an illustration).

\begin{property}\label{prop:spine-indices}
Let $\Gamma$ be a zig-zag drawing of a $\Delta$-regular caterpillar with starting vertex $v_j$ and ending vertex $v_r$. If $v_i$ is a spine vertex in the upper part of $\Gamma$, then $i=j+c(\Delta-1)$ for some $c \in \mathbb{N}$; if $v_g$ is a spine vertex in the lower part of $\Gamma$, then $g=r+d(\Delta-1)$ for some $d \in \mathbb{N}$. Moreover, if $v_i$ and $v_g$ are adjacent then either $c+d=\left \lceil \frac{\sigma}{2} \right \rceil-1$ or $c+d=\left \lceil \frac{\sigma}{2} \right \rceil$, where $\sigma$ is the number of spine
vertices of $\Gamma$.
\end{property}

\begin{figure}[tb]
	\centering
	\includegraphics[width=0.44\textwidth, page=1]{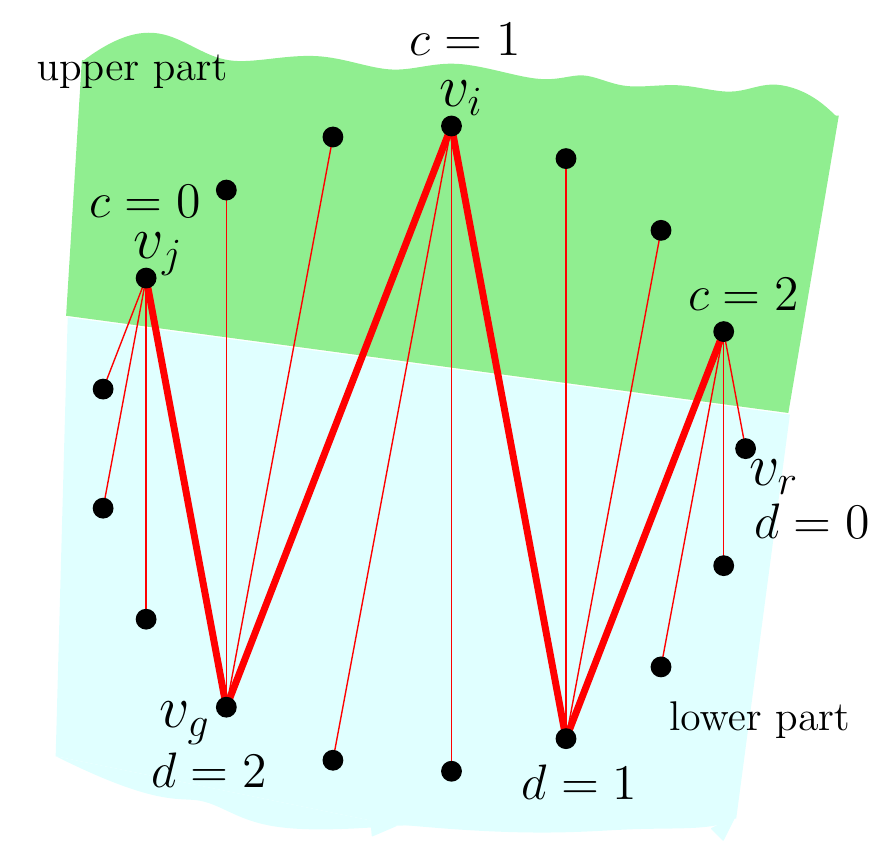}
	\caption{Illustration for \Cref{prop:spine-indices}; $\sigma=5$. For each spine vertex, $c$ and $d$ are shown. Considering adjacent spine vertices, the sum of $c$ and $d$ is 2 or 3.}\label{fi:spine-indices} 
\end{figure}

\Cref{prop:spine-indices} is extensively used in the proof of the following lemma.

\begin{lemma}\label{le:no-multiple-edges-multiple}
    Let $C_1$ be an $n$-vertex $\Delta_1$-regular caterpillar and let $C_2$ be an $n$-vertex $\Delta_2$-regular caterpillar such that $\Delta_1-1=q(\Delta_2-1)$, for some $q \in \mathbb{N}^+$ and $\Delta_i\leq n-2$ (for $i=1,2$). Let $\Gamma_1$ be a zig-zag drawing of $C_1$ with starting point $v_{j_1}$ and ending point $v_{r_1}$, and let $\Gamma_2$ be a zig-zag drawing of $C_2$ with starting point $v_{j_2}$ and ending point $v_{r_2}$. If $0 < j_2-j_1 < \frac{n-(\Delta_1-1)}{2}$, then $\Gamma_1 \cup \Gamma_2$ has no multiple edges.
\end{lemma}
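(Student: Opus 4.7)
My plan is to argue by contradiction: suppose $(v_a, v_b)$ with $a < b$ is a multiple edge of $\Gamma_1 \cap \Gamma_2$. Since every edge of a caterpillar is incident to a spine vertex, in each drawing $\Gamma_i$ at least one of $v_a, v_b$ is a spine vertex of $C_i$. By \Cref{prop:spine-indices}, such a spine has index $j_i + c(\Delta_i - 1)$ (upper part) or $r_i + d(\Delta_i - 1)$ (lower part), and adjacent upper/lower spines satisfy $c + d \in \{\lceil \sigma_i/2 \rceil - 1, \lceil \sigma_i/2 \rceil\}$. The crucial leverage is the hypothesis $\Delta_1 - 1 = q(\Delta_2 - 1)$: every spine index of $\Gamma_1$ is congruent to either $j_1$ or $r_1$ modulo $\Delta_2 - 1$, which lets one compare $\Gamma_1$-spines with $\Gamma_2$-spines purely through congruences modulo $\Delta_2 - 1$.

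After WLOG rotating so that $j_1 = 1$, I would enumerate the four cases based on which endpoint is a spine of $C_1$ and which is a spine of $C_2$: (A) $v_a$ is a spine in both drawings; (B) $v_b$ is a spine in both; (C) $v_a$ is a spine of $C_1$ and $v_b$ is a spine of $C_2$; (D) $v_b$ is a spine of $C_1$ and $v_a$ is a spine of $C_2$. In each case I would write down the two spine-position equations. The $\Gamma_1$-equation, reduced modulo $\Delta_2 - 1$, collapses to $a \equiv j_1 \pmod{\Delta_2 - 1}$ or $b \equiv r_1 \pmod{\Delta_2 - 1}$ (depending on which side of $\Gamma_1$ contains the spine endpoint) because $\Delta_1 - 1 \equiv 0 \pmod{\Delta_2 - 1}$. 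Pairing this with the analogous $\Gamma_2$-equation then yields a congruence of the form $j_2 - j_1 \equiv \tau \pmod{\Delta_2 - 1}$, where $\tau$ is a fixed constant computable from $r_i - j_i = \frac{n - (\Delta_i - 1)(\sigma_i \bmod 2)}{2}$ and the divisibility $\Delta_1 - 1 = q(\Delta_2 - 1)$.

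Given the strict bound $0 < j_2 - j_1 < \frac{n - (\Delta_1 - 1)}{2}$, this congruence restricts $j_2 - j_1$ to a small list of candidates. For each candidate I would substitute back into the original spine-position equations and apply either the adjacency restriction $c + d \in \{\lceil \sigma_i/2 \rceil - 1, \lceil \sigma_i/2 \rceil\}$ (in the spine--spine subcases) or the explicit leaf-position formulas from the zig-zag construction (in the spine--leaf subcases), deriving a contradiction. The main obstacle is the parity bookkeeping: each of the four cases subdivides according to the parities of $\sigma_1$ and $\sigma_2$, which alter $r_i$ and the distribution of spines on each arc. To keep the argument manageable, I would isolate a single algebraic claim --- that the smallest strictly positive $j_2 - j_1$ compatible with all of the congruence, adjacency, and leaf-placement constraints equals $\frac{n - (\Delta_1 - 1)}{2}$ --- and verify this claim uniformly across the parity subcases, so that the case analysis itself reduces to checking which of the four spine patterns leads to which instance of the single claim.
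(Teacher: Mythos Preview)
Your outline starts in the right place---contradiction, the spine-index formulas of \Cref{prop:spine-indices}, and the divisibility $\Delta_1-1=q(\Delta_2-1)$---but there are two genuine gaps.

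First, the reduction modulo $\Delta_2-1$ is too lossy to do the work you assign it. When $\Delta_2=2$ the modulus is $1$ and the congruence $j_2-j_1\equiv\tau$ is vacuous; even for larger $\Delta_2$ the interval $\bigl(0,\tfrac{n-(\Delta_1-1)}{2}\bigr)$ has length roughly $\tfrac{\sigma_1}{2}(\Delta_1-1)=\tfrac{q\sigma_1}{2}(\Delta_2-1)$, so it contains on the order of $\tfrac{q\sigma_1}{2}$ residues, not ``a small list of candidates.'' Your closing ``single algebraic claim'' that the least admissible $j_2-j_1$ equals $\tfrac{n-(\Delta_1-1)}{2}$ is then essentially the lemma itself; the congruence step has not reduced it.

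Second, your four cases (A)--(D) omit a layer of the case split. Upper and lower parts are defined \emph{per drawing}: the upper part of $\Gamma_l$ is the arc from $v_{j_l}$ to $v_{r_l-1}$. A common edge $(v_a,v_b)$ can therefore align the two drawings ``in parallel'' (the $\Gamma_1$-upper endpoint is also the $\Gamma_2$-upper endpoint) or ``crossed'' (the $\Gamma_1$-upper endpoint is the $\Gamma_2$-lower endpoint), and the condition $a<b$ does not decide which. The paper treats these as its Case~1 and Case~2, with your four spine patterns inside each---eight subcases in all. The parity of $\sigma_1,\sigma_2$ that you flag is a secondary issue, handled uniformly through $r_l=j_l+\tfrac{n-(\Delta_l-1)(\sigma_l\bmod 2)}{2}$.

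The paper's route avoids congruences entirely. Assuming one pair of endpoints coincides yields an \emph{exact} linear relation such as $j_2-j_1=(qc_1-c_2)(\Delta_2-1)$, not merely its residue. Feeding that equation, together with the adjacency constraint $c_l+d_l=\lceil\sigma_l/2\rceil-1$ and the hypothesis bounds on $j_2-j_1$, into the expression for the other pair of endpoints produces a strict inequality (for instance $g_1^{M}<g_2^{m}$) that separates them. The point is to keep the full equation coming from the first coincidence rather than only its reduction modulo $\Delta_2-1$.
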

\begin{proof}
    Let $(v_{i_1},v_{g_1})$ be an edge of $\Gamma_1$ and $(v_{i_2},v_{g_2})$ be an edge of $\Gamma_2$. Assume that $v_{i_1}$ belongs to the upper part of $\Gamma_1$ and $v_{i_2}$ belongs to the upper part of $\Gamma_2$. Note that this implies that $v_{g_1}$ belongs to the lower part of $\Gamma_1$ and $v_{g_2}$ belongs to the lower part of $\Gamma_2$. We prove that $(v_{i_1},v_{g_1})$ and $(v_{i_2},v_{g_2})$ do not coincide.

    We first show that it does not happen that $v_{i_1}$ coincides with $v_{i_2}$ and $v_{g_1}$ coincides with $v_{g_2}$. We then show that it does not happen that $v_{i_1}$ coincides with $v_{g_2}$ and $v_{g_1}$ coincides with $v_{i_2}$. In the rest of the proof we will express the four indices $i_1$, $i_2$, $g_1$ and $g_2$ in terms of the values $j_1$, $j_2$, $r_1$ and $r_2$, according to \Cref{prop:spine-indices}. Without loss of generality, we can assume that $r_2 \leq n$ and $j_1 \geq 1$, i.e., that the vertices $v_{r_2}$, $v_n$, $v_1$, and $v_{j_1}$ appear in this clockwise order, with $v_{r_2}$ and $v_n$ possibly coincident and with $v_1$ and $v_{j_1}$ possibly coincident. With these assumptions, we have $j_1 < j_2 < r_1 < r_2$ and $v_{i_1}$ can coincide with $v_{g_2}$ only if $i_1=g_2-n$, i.e., only if the value of $g_2$ is greater than $n$ and coincides with $i_1$ modulo $n$. Thus, while assuming that $v_{i_1}$ coincides with $v_{i_2}$ implies that $i_1=i_2$, assuming that $v_{i_1}$ coincides with $v_{g_2}$ implies that $i_1=g_2-n$. 

    \medskip \noindent \textbf{Case 1: It does not happen that $v_{i_1}$ coincides with $v_{i_2}$ and $v_{g_1}$ coincides with $v_{g_2}$.}    
    
    At least one vertex per edge is a spine vertex. We distinguish four sub-cases depending on which vertex is a spine vertex for each edge. Since all the cases are very similar, we give here only the first case and the others can be found in the appendix.

    \medskip \noindent \textbf{Case 1.a:  $v_{i_1}$ and $v_{i_2}$ are spine vertices.}
    By \Cref{prop:spine-indices} we have, for some $c_1,c_2 \in \mathbb{N}$:
    \begin{equation}\label{eq:c1-i1}
        i_1=j_1+c_1(\Delta_1-1)=j_1+qc_1(\Delta_2-1).
    \end{equation}
    and 
    \begin{equation}\label{eq:c1-i2}
        i_2=j_2+c_2(\Delta_2-1).
    \end{equation}
    
    If $v_{i_1}$ coincides with $v_{i_2}$, we have 
    $i_1=i_2$; from \Cref{eq:c1-i1} and \Cref{eq:c1-i2} we obtain:
    \begin{equation}\label{eq:c1-i1-A}
        j_2-j_1 = (qc_1-c_2)(\Delta_2-1).
    \end{equation}
    
    \noindent Concerning $v_{g_1}$ and $v_{g_2}$, we have:
    \begin{equation*}\label{eq:c1-g1}
    g_1^{m} \leq g_1 \leq g_1^{M};
    \end{equation*}
    \begin{equation*}\label{eq:c1-g2}
    g_2^{m} \leq g_2 \leq g_2^{M}.
    \end{equation*}
   
    \noindent with $g_l^{m}=r_l+d_l(\Delta_l-1)$, $g_l^{M}=r_l+(d_l+1)(\Delta_l-1)$ for some $d_l \in \mathbb{N}$ such that $c_l+d_l=\left\lceil \frac{\sigma_l}{2}\right \rceil-1$, where $\sigma_l$ is the number of spine vertices of $C_l$, for $l=1,2$.     

    \noindent We prove that $g_1^{M} < g_2^{m}$, which implies $g_1 \neq g_2$. To have $g_1^{M} < g_2^{m}$ it must be:
    \begin{align*}
        &r_1+(d_1+1)(\Delta_1-1) < r_2+d_2(\Delta_2-1)\\
        &r_1+q(d_1+1)(\Delta_2-1) < r_2+d_2(\Delta_2-1)\\
        &r_2-r_1 > (qd_1+q-d_2) (\Delta_2-1) \numberthis \label{eq:c1-i1-B}.
    \end{align*}
    \noindent Since $r_l=j_l+\frac{n-(\Delta_l-1)(\sigma_l \mod 2)}{2}$, for $l=1,2$, \Cref{eq:c1-i1-B} can be rewritten as:
    \begin{equation}\label{eq:c1-i1-B'}
        j_2-j_1>\left( (qd_1+q-d_2) +\frac{(\sigma_2 \mod 2)}{2}-\frac{q(\sigma_1 \mod 2) }{2}\right)(\Delta_2-1).
    \end{equation}
\noindent Combining \Cref{eq:c1-i1-A} and \Cref{eq:c1-i1-B'} we obtain:
    \begin{equation*}
        qc_1-c_2> (qd_1+q-d_2) +\frac{(\sigma_2 \mod 2)}{2}-\frac{q(\sigma_1 \mod 2) }{2}.
    \end{equation*}

\noindent Since $c_l+d_l=\left\lceil \frac{\sigma_l}{2}\right \rceil-1$, we have $d_l= \frac{\sigma_l+1(\sigma_l \mod 2)}{2}-c_l-1$, for $l=1,2$; replacing $d_1$ and $d_2$ in the previous equation, we obtain:
    \begin{align*}
        &qc_1-c_2> \frac{q\sigma_1}{2}+\frac{q(\sigma_1 \mod 2)}{2}-qc_1-q+q-\frac{\sigma_2}{2}-\frac{1(\sigma_2 \mod 2)}{2}+c_2+\\
        &+1+\frac{\sigma_2 \mod 2}{2}-\frac{q(\sigma_1 \mod 2)}{2}
    \end{align*}
\noindent which, considering that $\sigma_2=\frac{n-2}{\Delta_2-1}=\frac{q(n-2)}{\Delta_1-1}=q\sigma_1$, implies:
    \begin{equation}\label{eq:c1-i1-C}
        qc_1-c_2> \frac{1}{2}
    \end{equation}

\noindent In summary, to have $g_1^{M} < g_2^{m}$ \Cref{eq:c1-i1-C} must hold. On the other hand, from \Cref{eq:c1-i1-A} and from the hypothesis that $j_2-j_1>0$ we obtain $(qc_1-c_2)(\Delta_2-1)>0$ which, since $(\Delta_2-1)>0$, implies $qc_1-c_2>0$ and, since $qc_1-c_2$ is integer, can be rewritten as $qc_1-c_2\geq 1$. This implies that \Cref{eq:c1-i1-C} holds and therefore that $g_1^{M} < g_2^{m}$ and $g_1 \neq g_2$.

\medskip \noindent \textbf{Case 2: It does not happen that $v_{i_1}$ coincides with $v_{g_2}$ and $v_{g_1}$ coincides with $v_{i_2}$.}
    
    Also in this case we distinguish four sub-cases depending on which vertex is a spine vertex for each edge. As in \textbf{Case 1}, we give here only the first sub-case, while the others can be found in the appendix.
    
    \medskip \noindent \textbf{Case 2.a: $v_{i_1}$ and $v_{i_2}$ are spine vertices.} Since $v_{g_2}$ is a vertex in the lower part of $\Gamma_2$, it must be $g_2=r_2+d_2(\Delta_2-1)+\alpha_2$, for some $\alpha_2$ such that  $0 \leq \alpha_2 < \Delta_2-1$. If $v_{g_2}$ coincides with $v_{i_1}$, as explained above, it must be $i_1=g_2-n$. Combining the expression of $g_2$ with \Cref{eq:c1-i1} we obtain:
    \begin{equation}\label{eq:c2-i1-A}
        r_2-j_1 = (qc_1-d_2)(\Delta_2-1)-\alpha_2+n.
    \end{equation}
    
    \noindent Concerning $v_{g_1}$, we have:
    \begin{equation*}\label{eq:c2-g1}
    g_1^{m} \leq g_1 \leq g_1^{M};
    \end{equation*}
       
    \noindent with $g_1^{m}=r_1+d_1(\Delta_1-1)$, $g_1^{M}=r_1+(d_1+1)(\Delta_1-1)$ for some $d_1 \in \mathbb{N}$ such that $c_1+d_1=\left\lceil \frac{\sigma_1}{2}\right \rceil-1$, where $\sigma_1$ is the number of spine vertices of $C_1$.     

    \noindent We prove that $i_2 < g_1^{m}$, which implies $i_2 \neq g_1$. To have $i_2 < g_1^{m}$ it must be:
    \begin{align*}
        &j_2+c_2(\Delta_2-1) < r_1+d_1(\Delta_1-1)\\
        &j_2+c_2(\Delta_2-1) < r_1+qd_1(\Delta_2-1)\\
        &j_2-r_1 < (qd_1-c_2) (\Delta_2-1) \numberthis\label{eq:c2-i1-B}.
    \end{align*}
    \noindent Since $r_l=j_l+\frac{n-(\Delta_l-1)(\sigma_l \mod 2)}{2}$, for $l=1,2$, \Cref{eq:c2-i1-A} can be rewritten as:
    \begin{equation}\label{eq:c2-i1-A'}
        j_2-j_1 = (qc_1-d_2)(\Delta_2-1)-\alpha_2+\frac{n}{2}+\frac{(\Delta_2-1)(\sigma_2 \mod 2)}{2},
    \end{equation}    
    
\noindent while \Cref{eq:c2-i1-B} can be rewritten as:
    \begin{equation}\label{eq:c2-i1-B'}
        j_2-j_1<\left( (qd_1-c_2) -\frac{q(\sigma_1 \mod 2) }{2}\right)(\Delta_2-1)+ \frac{n}{2}.
    \end{equation}
    
\noindent Combining \Cref{eq:c2-i1-A'} and \Cref{eq:c2-i1-B'} we obtain:
    \begin{equation*}
        qc_1-d_2 < (qd_1-c_2) -\frac{(\sigma_2 \mod 2)}{2}-\frac{q(\sigma_1 \mod 2)}{2}+\frac{\alpha_2}{\Delta_2-1}.
    \end{equation*}

\noindent Since $c_l+d_l=\left\lceil \frac{\sigma_l}{2}\right \rceil-1$, we have $d_l= \frac{\sigma_l+1(\sigma_l \mod 2)}{2}-c_l-1$, for $l=1,2$; replacing $d_1$ and $d_2$ in the previous equation, we obtain:
    \begin{align*}
        &qc_1-\frac{\sigma_2}{2}-\frac{\sigma_2 \mod 2}{2}+c_2+1 < \frac{q\sigma_1}{2}+\frac{q(\sigma_1 \mod 2)}{2}-qc_1-q-c_2-\\
        &-\frac{\sigma_2 \mod 2}{2}-\frac{q(\sigma_1 \mod 2)}{2}+\frac{\alpha_2}{\Delta_2-1}
    \end{align*}
\noindent which, considering that $\sigma_2=\frac{n-2}{\Delta_2-1}=\frac{q(n-2)}{\Delta_1-1}=q\sigma_1$, implies:
    \begin{equation}\label{eq:c2-i1-C}
        qc_1-\frac{q\sigma_1}{2}+c_2< -\frac{q+1}{2}+\frac{\alpha_2}{2(\Delta_2-1)}
    \end{equation}

\noindent In summary, to have $i_2^M < g_1^{m}$ \Cref{eq:c2-i1-C} must hold. On the other hand, from \Cref{eq:c2-i1-A'} and from the hypothesis that $j_2-j_1<\frac{n-(\Delta_1-1)}{2}=\frac{n-q(\Delta_2-1)}{2}$ we obtain:
\begin{equation*}\label{eq:c2-i1-D}
        qc_1-d_2 +\frac{1}{2}(\sigma_2 \mod 2)<-\frac{q}{2}+\frac{\alpha_2}{\Delta_2-1}.
\end{equation*}

\noindent Replacing again $d_2$ with $\frac{\sigma_2+1(\sigma_2 \mod 2)}{2}-c_2-1$, we obtain:
\begin{equation}\label{eq:c2-i1-D'}
        qc_1-\frac{q\sigma_1}{2}+c_2<-\frac{q+2}{2}+\frac{\alpha_2}{\Delta_2-1}.
\end{equation}

\noindent We have that $-\frac{q}{2}-1+\frac{\alpha_2}{\Delta_2-1} < -\frac{q}{2}-\frac{1}{2}+\frac{\alpha_2}{2(\Delta_2-1)}$, since $\frac{\alpha_2}{2(\Delta_2-1)} < \frac{1}{2}$. 

In other~words, \Cref{eq:c2-i1-D'} implies that \Cref{eq:c2-i1-C} holds and therefore that $i_2 < g_1^{m}$ and $i_2 \neq g_1$.
     
\end{proof}

\begin{theorem}\label{th:upper-regular-2}
    Let $C_1, C_2, \dots, C_h$ be $h$ caterpillars such that $C_i$ is $\Delta_i$-regular, $\Delta_i-1$ is a multiple of $\Delta_{i+1}-1$, with $1 \le i < h$, and $\Delta_i\leq n-h$ (for $i=1,2,\dots,h$). If $n \geq 2h+(\Delta_1-1)$, then there exists a $k$-planar packing with $k \in O(\Delta_1 h+h^2)$.
\end{theorem}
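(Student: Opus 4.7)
The plan is to mimic the construction used for \Cref{th:characterization}, but to replace \Cref{le:rotation} by \Cref{le:no-multiple-edges-multiple}, which is precisely the multiple-edge exclusion lemma tailored to the divisibility assumption $\Delta_i-1 \mid \Delta_{i-1}-1$. Concretely, I compute a zig-zag drawing $\Gamma_i$ of each $C_i$ with starting point $v_{j_i}$, choosing $j_i = i$ for $i = 1, 2, \dots, h$; that is, each $\Gamma_i$ is the rotation of a canonical zig-zag drawing by $i-1$ steps. The degree condition $\Delta_i \le n-h$ together with $\sum_i \deg_{C_i}(v) \le h \cdot \Delta_1 \le \ldots$ ensures that no vertex exceeds degree $n-1$ in the host graph once I have ruled out multiple edges.

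Next, for every pair of indices $1 \le i < i' \le h$, the chain of divisibilities $\Delta_l-1 \mid \Delta_{l+1}-1 \cdot (\text{inverse direction})$ implies that $\Delta_i-1$ is a multiple of $\Delta_{i'}-1$, so the pair $(C_i, C_{i'})$ fits the hypothesis of \Cref{le:no-multiple-edges-multiple}. I only need to check that $0 < j_{i'} - j_i < \frac{n-(\Delta_i-1)}{2}$. The left inequality is trivial since $i' > i$. For the right inequality, $j_{i'}-j_i \le h-1$ and the hypothesis $n \ge 2h + (\Delta_1-1) \ge 2h + (\Delta_i-1)$ yields $\frac{n-(\Delta_i-1)}{2} \ge h > h-1$. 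Therefore \Cref{le:no-multiple-edges-multiple} applies to every pair, and the union $\Gamma_1 \cup \Gamma_2 \cup \cdots \cup \Gamma_h$ is a valid packing of $C_1, C_2, \dots, C_h$.

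To bound the number of crossings I invoke \Cref{le:crossings} on each ordered pair. For any edge $e$ of $\Gamma_i$ and any $l \neq i$, \Cref{le:crossings} bounds the contribution of $\Gamma_l$ to the crossings along $e$ by $2(\Delta_i+\Delta_l)+4|j_l - j_i|$; its applicability requires $|j_l - j_i| < n/2$, which holds since $|j_l-j_i| \le h-1 < n/2$ (from $n \ge 2h+\Delta_1-1 > 2h$). Summing over $l \in \{1,\dots,h\}\setminus\{i\}$ and using $\Delta_l \le \Delta_1$ and $|j_l - j_i| \le h-1$, I obtain
\begin{equation*}
\chi_e \;\le\; \sum_{l \neq i}\bigl(2(\Delta_i+\Delta_l)+4|j_l-j_i|\bigr) \;\le\; 4\Delta_1(h-1) + 4(h-1)^2,
\end{equation*}
which is in $O(\Delta_1 h + h^2)$. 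The same bound holds for edges of any $\Gamma_i$ by symmetry.

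The only genuinely non-trivial step is checking the width condition $j_{i'} - j_i < \frac{n-(\Delta_i-1)}{2}$ of \Cref{le:no-multiple-edges-multiple}; everything else is bookkeeping of what has already been proved. Note in particular that the assumption $n \ge 2h+(\Delta_1-1)$ is used in exactly this place, and that it is what allows us to pack caterpillars of highly different degrees that would otherwise fail the sharper hypothesis $\sum_{i=2}^h \lceil\Delta_i/2\rceil < \tfrac{n-(\Delta_1-1)}{2}$ of \Cref{th:general-caterpillars}, as illustrated by the $17,9,9$ example preceding the theorem.
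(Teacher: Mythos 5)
Your proposal follows essentially the same route as the paper: the same choice of starting points $j_i=i$, the same appeal to \Cref{le:no-multiple-edges-multiple} for each pair (using transitivity of the divisibility hypothesis and the bound $j_{i'}-j_i\le h-1<\frac{n-(\Delta_i-1)}{2}$ coming from $n\ge 2h+(\Delta_1-1)$), and the same crossing count via \Cref{le:crossings}. The argument is correct; your constant in the final bound differs slightly from the paper's (which sums $4(l-1)$ exactly rather than bounding each term by $4(h-1)$), but both give $O(\Delta_1 h+h^2)$.
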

\begin{proof}
    For each $C_i$, with $1 \leq i \leq h$, we compute a zig-zag drawing $\Gamma_i$ with starting vertex $v_i$. Notice that, given two caterpillars $C_{j_1}$ and $C_{j_2}$ with $1 \leq j_1 < j_2 \le h$, we have that $\Delta_{j_1}-1$ is a multiple of $\Delta_{j_2}-1$, and the zig-zag drawings $\Gamma_{j_1}$ and $\Gamma_{j_2}$ have starting vertices $v_{j_1}$ and $v_{j_2}$, respectively. Hence, $0 < j_2 - j_1 < h$ and by hypothesis $h \leq \frac{n-(\Delta_1-1)}{2}$. Hence, \Cref{le:no-multiple-edges-multiple} holds for every pair of caterpillars and the union of all zig-zag drawings $\Gamma_1, \Gamma_2, \dots, \Gamma_h$ is a valid packing of $C_1, C_2, \dots, C_h$. 

    The proof of the bound on the number of crossings along an edge is the same as the one of \Cref{th:characterization}, considering that $\Delta_l \leq \Delta_1$ and that $j_l-j_1=l-1$ for every $2 \leq l \leq h$. 
\end{proof}

\section{Lower bounds}\label{se:lower-bounds}

In this section we first give a general lower bound on the value of $k$ for $k$-planar $h$-packings; we then increase this lower bound for some small values of $h$.

\begin{theorem}\label{th:lower-bound}
 	Every $k$-planar $h$-packing of $h$ graphs with $n$ vertices and $m$ edges is such that $k \geq \frac{h^2m^2}{14.6n^2}$.  
 \end{theorem}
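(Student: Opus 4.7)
The plan is to obtain the bound as an immediate consequence of the Crossing Lemma applied to the host graph of the packing. Let $G$ be the host graph of the $k$-planar $h$-packing. Since the $h$ graphs are edge-disjoint spanning subgraphs of $G$, we have $|V(G)|=n$ and $|E(G)|=hm$, and $G$ itself admits a $k$-planar drawing.

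First I would invoke the best known form of the Crossing Lemma (Ackerman's improvement), which states that any graph with $N$ vertices and $M$ edges, where $M$ is above a small constant multiple of $N$, satisfies $\mathrm{cr}(G)\ge \tfrac{M^{3}}{29\,N^{2}}$. Instantiating with $N=n$ and $M=hm$ gives
\[
\mathrm{cr}(G)\ \ge\ \frac{(hm)^{3}}{29\,n^{2}}.
\]
Second, I would bound $\mathrm{cr}(G)$ from above using $k$-planarity: in a drawing where every edge is crossed at most $k$ times, the total number of crossings is at most $k|E(G)|/2$, since each crossing is charged to two edges. Hence
\[
\mathrm{cr}(G)\ \le\ \frac{k\cdot hm}{2}.
\]

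Combining the two inequalities and dividing both sides by $hm>0$ yields
\[
k\ \ge\ \frac{2(hm)^{2}}{29\,n^{2}}\ =\ \frac{h^{2}m^{2}}{14.5\,n^{2}},
\]
which is essentially the claimed inequality; the constant $14.6$ in the statement matches what one gets from the appropriate form of the Crossing Lemma used in the literature, so only a trivial adjustment of the constant is needed at the end.

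The only subtlety, and the one place where some care is required, is the hypothesis on the edge-count in the Crossing Lemma: the cubic lower bound $\mathrm{cr}(G)\ge M^{3}/(29N^{2})$ is only effective once $M$ exceeds a small constant multiple of $N$. If $hm$ is below that threshold, then the right-hand side of the target inequality reduces to a small constant and the statement is essentially vacuous (it is either trivially true or can be covered by the elementary linear lower bound $\mathrm{cr}(G)\ge M-cN$). Thus the main obstacle is purely bookkeeping around the regime of validity of the Crossing Lemma; the core proof is the one-line combination above.
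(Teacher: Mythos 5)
Your proposal is correct and is essentially the paper's own argument: the paper simply cites the edge-density bound $|E|\le 3.81\sqrt{k}\,n$ for $k$-planar graphs from Ackerman's paper, a bound that is derived there by exactly the crossing-lemma computation you carry out ($M^3/(29N^2)\le \mathrm{cr}(G)\le kM/2$), and then combines it with $|E(G)|=hm$ to obtain $k\ge h^2m^2/(3.81^2n^2)\ge h^2m^2/(14.6\,n^2)$. The regime-of-validity caveat you flag is equally present in the paper's version (the cited density bound is stated only for $k\ge 2$) and is left implicit there as well.
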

 \begin{proof}
     The number of edges of a $k$-planar graph with~$n$ vertices is at most $3.81\sqrt{k}\cdot n$, for $k \geq 2$~\cite{ACKERMAN2019101574}. Since the $h$ graphs have $h \cdot m$ edges in total, a $k$-planar packing of these graphs can exist only if $h \leq 3.81\sqrt{k}\frac{n}{m}$, i.e., if $k \geq \frac{h^2m^2}{14.6n^2}$. 
 \end{proof}

Since for a tree $m=n-1$, we have the following.

\begin{corollary}\label{co:lower-bound-trees}
 	Every $k$-planar $h$-packing of $h$ trees is such that $k \geq \frac{h^2}{58.4}$.  
 \end{corollary}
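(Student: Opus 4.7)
The plan is to obtain the corollary as a direct consequence of Theorem~\ref{th:lower-bound} by substituting the number of edges of a tree and then bounding the resulting factor $(n-1)^2/n^2$ away from zero uniformly in $n$.

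First, I would invoke Theorem~\ref{th:lower-bound}, which guarantees
\[
k \;\ge\; \frac{h^2 m^2}{14.6\, n^2}
\]
for any $k$-planar $h$-packing of $n$-vertex, $m$-edge graphs. Since each of the $h$ packed graphs is a tree on $n$ vertices, we have $m = n-1$, so the bound becomes
\[
k \;\ge\; \frac{h^2 (n-1)^2}{14.6\, n^2}.
\]

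Second, I would observe that the only potentially vanishing ingredient is the ratio $(n-1)^2/n^2$, which I need to bound below by a constant. By Property~\ref{prop:necessary}, a non-trivial packing requires $n \ge 2h \ge 2$, hence $n \ge 2$. For every such $n$ we have $\frac{n-1}{n} \ge \frac{1}{2}$, and therefore $\frac{(n-1)^2}{n^2} \ge \frac{1}{4}$. Substituting this into the displayed inequality yields
\[
k \;\ge\; \frac{h^2}{14.6 \cdot 4} \;=\; \frac{h^2}{58.4},
\]
which is exactly the claimed bound.

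There is essentially no obstacle here: the only subtle point is to justify that $n \ge 2$ (so that $(n-1)/n$ is bounded away from $0$), which follows from the necessary condition $n \ge 2h$ already established in Property~\ref{prop:necessary}. For $h = 0$ or $h = 1$ the statement is vacuous or trivial, so the restriction $h \ge 1$ (implicit in an $h$-packing) causes no issue. The constant $58.4$ in the denominator is simply $4 \cdot 14.6$, reflecting the worst-case loss coming from the $n = 2$ boundary case.
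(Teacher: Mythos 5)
Your proposal is correct and follows the paper's own route exactly: the paper obtains the corollary by substituting $m=n-1$ into Theorem~\ref{th:lower-bound}, and your additional step of bounding $(n-1)^2/n^2 \ge 1/4$ via $n \ge 2h \ge 2$ just makes explicit where the factor $4$ in $58.4 = 4 \cdot 14.6$ comes from.
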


We now refine the lower bound above for small values of $h$ in an $h$-placement of  caterpillars. Specifically we show that for values of $h$ equal to $3$, $4$, and $5$ the corresponding lower bounds are $2$, $3$, and $5$, respectively. Note that for all these cases the lower bound implied by Corollary~\ref{co:lower-bound-trees} is $1$.

\begin{theorem}\label{th:non-delta-regular.1}
    For $h=3,4$ there exists a caterpillar $C$ with at least $h+7$ vertices for which every $k$-planar $h$-placement of $C$ is such that $k \geq h-1$.
    For $h=5$ there exists a caterpillar $C$ with at least $24$ vertices for which every $k$-planar $5$-placement of $C$ is such that $k \geq h$.
\end{theorem}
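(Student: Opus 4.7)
My plan for \Cref{th:non-delta-regular.1} is, for each $h \in \{3,4,5\}$, to exhibit an explicit witness caterpillar $C$ on the required number of vertices and then argue by contradiction: I would assume an $h$-placement of $C$ lives in a $k$-planar graph $G$ with $k$ strictly below the claimed threshold and derive a violation of a known $k$-planar density or structural property. The construction should maximize the ``crossing pressure'', so I would choose $C$ to be a $\Delta$-regular caterpillar with $\Delta$ as close as possible to the upper limit $n-h$ imposed by \Cref{prop:necessary}: for $h = 3$ a caterpillar on $n = 10$ vertices with spine length $\sigma = 2$ (hence $\Delta = 5$); for $h = 4$ a caterpillar on $n = 11$ vertices tuned analogously; for $h = 5$ a caterpillar on $n = 24$ vertices with $\sigma$ chosen to optimise the subsequent counting.

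Given such a $C$, my contradiction would proceed in two parts. First, I would count the total number of edges contributed by the $h$ copies, which is exactly $h(n-1)$. Second, I would compare this to the sharpest known upper bounds on the number of edges of a $k$-planar graph on $n$ vertices: Pach--T\'oth's $4n-8$ for $k=1$ and $5n-10$ for $k=2$, together with Ackerman's $\lfloor 5.5n \rfloor - 11$ for $k=3$ and $6n-12$ for $k=4$. In parallel, I would use a local pigeonhole argument on the distribution of the $h\sigma$ spine vertices across the $n$ host vertices: at least one host vertex receives $\lceil h\sigma/n \rceil$ spines, and hence has degree in $G$ at least $\lceil h\sigma/n \rceil(\Delta-1) + h$.

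The main obstacle, and where I expect the bulk of the work to lie, is that a purely global edge count is not strong enough. For instance, when $h = 5$ and $n = 24$ one has $5(n-1) = 115 < 132 = 6n-12$, so a global count against the $4$-planar bound fails outright; similar but smaller slack exists for $h = 3, 4$. My strategy to overcome this is to zoom in on the dense neighbourhood around the high-degree vertex $v$ identified by pigeonhole: the subgraph of $G$ induced on $v$ together with its neighbours should contain too many edges to be drawable $(h-2)$-planarly (or $(h-1)$-planarly for $h = 5$) on the corresponding number of vertices, yielding the desired contradiction. An alternative route, possibly simpler for $h = 5$, is to exploit a girth-based or bipartite-like refinement of the Crossing Lemma: each individual caterpillar is bipartite, and although their union need not be, one can still bound short cycles in the union by tracking how many $4$-cycles any two of the placed caterpillars can create together.

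Finally, the phrasing ``at least $h+7$ vertices'' and ``at least $24$ vertices'' in the statement suggests that once the inequality is verified at the minimum admissible $n$, it extends monotonically to all larger $n$; I would check this explicitly by inspecting how each side of each inequality scales with $n$. I would also verify that the chosen caterpillars satisfy the conditions of \Cref{th:characterization} so that their $h$-placements exist at all, ensuring the claim genuinely concerns the minimum $k$ rather than nonexistence of any placement.
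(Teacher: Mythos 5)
There is a genuine gap, and it starts with the choice of witness. You propose $\Delta$-regular caterpillars, but the whole point of the paper's construction is a caterpillar with a single \emph{center} vertex of degree exactly $n-h$, i.e., the maximum allowed by \Cref{prop:necessary}. A $\Delta$-regular caterpillar on $n\geq h+7$ vertices cannot have such a vertex (one would need $\Delta=n-h$ and $n=\sigma(\Delta-1)+2$, which forces $n=2h$ for $\sigma=2$), and indeed the paper explicitly lists the $\Delta$-regular version of this lower bound as an open problem. The saturation at degree $n-h$ is what drives the proof: the $h$ centers must land on distinct host vertices (two centers on one vertex would give degree $2(n-h)>n-1$), and each such vertex then has degree exactly $(n-h)+(h-1)=n-1$ in the host, i.e., it is adjacent to \emph{every} other vertex. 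Hence the host contains $K_{h,n-h}$, and the theorem follows from the known facts that $K_{3,7}$ is not $1$-planar, $K_{4,7}$ is not $2$-planar, and $K_{5,19}$ is not $4$-planar. Your pigeonhole bound on the number of spine vertices mapped to a single host vertex only yields a vertex of moderately high degree (e.g., degree $7$ on $10$ vertices for $h=3$), which forces no dense subgraph and no contradiction with $1$-planarity.

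The second gap is that your contradiction step is never actually carried out. You correctly observe that the global edge count against the $k$-planar density bounds fails, but the proposed repairs are only sketched: there is no argument that the subgraph induced on a high-degree vertex and its neighbours contains anything beyond the $\deg(v)$ edges incident to $v$, and the $4$-cycle/bipartiteness refinement of the crossing lemma is not developed to the point where it gives $k\geq h-1$ (let alone $k\geq h=5$ for the third case, where even the $4$-planar edge bound leaves slack, as you note). To close the argument you need a mechanism that forces a concrete non-$k$-planar subgraph into the host, and the degree-$(n-h)$ center construction together with the complete bipartite obstructions $K_{3,7}$, $K_{4,7}$, $K_{5,19}$ is exactly that mechanism; without it, the proposal does not reach the stated bounds.
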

\begin{proof}
Case $h=3,4$. Let $n$ be an integer such that $n \geq h+7$, and let $C_{n,h}$ be the $n$-vertex caterpillar shown in \Cref{fi:caterpillar}. Notice that the vertex of $C_{n,h}$ denoted as $v$ in \Cref{fi:caterpillar} has degree $n-h$; we call it the \emph{center} of $C_{n,h}$. 
Consider any $h$-placement of $C_{n,h}$ into a graph $G$ and denote as  $v_i$ the vertex of $G$ which the center of $C_i$ is mapped to ($i=1,2,\dots,h$). The vertices $v_1,v_2,\dots,v_h$ must be distinct because, if two centers were mapped to the same vertex of $G$ then this vertex would have degree larger than $n-1$. Namely, if two centers are mapped to the same vertex, this vertex has degree $2n-2h$ which is larger than $n-1$ if $n > 2h-1$, i.e., if $h+7 > 2h-1$, which is true for $h < 6$. Since each $v_i$ ($1 \leq i \leq h$) has degree $n-h$ in $C_i$ and degree $1$ in each of the $h-1$ other caterpillars, its degree in $G$ is $n-1$. Thus, $G$ contains $K_{h,n-h}$. Thus, for $h=3$, $G$ contains $K_{3,7}$ ($n \geq 10$ in this case), which is not $1$-planar~\cite{CZAP2012505}; for $h=4$, $G$ contains $K_{4,7}$ ($n \geq 11$ in this case), which is~not $2$-planar~\cite{JGAA-531}. The case $h=5$ is analogous with $K_{5,19}$, which is not $4$-planar~\cite{ANGELINI201823}.\end{proof}

\begin{figure}[tbp]
		\centering
		\includegraphics[width=0.4\columnwidth, page=1]{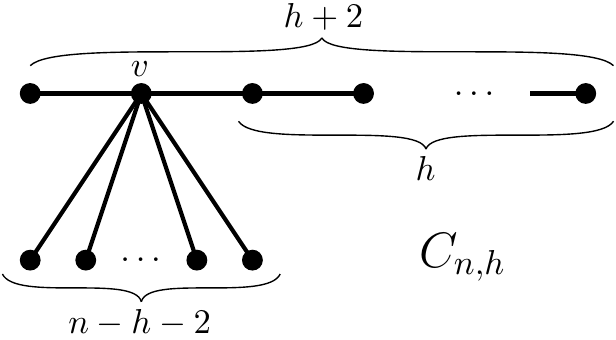}
    \caption{\label{fi:caterpillar} A caterpillar as described in the proof of \Cref{th:non-delta-regular.1}.}
    \end{figure}

\section{Concluding Remarks and Open Problems}\label{se:open-problems}

This paper studied the placement and the  packing of caterpillars into $k$-planar graphs. It proved necessary and sufficient conditions for the $h$-placement of $\Delta$-regular caterpillars in a $k$-planar graph and sufficient conditions for the packing of a set of $\Delta_1$-, $\Delta_2$-, $\dots$, $\Delta_h$-regular caterpillars with $k \in O(\Delta_1 h^2)$ ($\Delta_1$ is the maximum vertex degree in the set). The work in this paper contributes to the rich literature concerning the placement and the packing problem in planar and non-planar host graphs and it specifically relates with a recent re-visitation of these questions in the beyond-planar context. 

\smallskip

Many open problems naturally arise from the research in this paper. We conclude the paper by listing some of those that, in our opinion,  are among the most interesting.

\begin{itemize}
    \item Extend the characterization of Theorem~\ref{th:characterization} to the placement of caterpillars that are not $\Delta$-regular.
    \item Theorems~\ref{th:upper-regular-2} and \ref{th:general-caterpillars} give sufficient conditions for the $k$-planar packing of some families of caterpillars. It would be interesting to give a complete characterization of the packability of these families into $k$-planar graphs.
    \item \Cref{th:non-delta-regular.1} improves the lower bound of \Cref{th:lower-bound} for caterpillars that are not $\Delta$-regular. It would be interesting to find a similar result with $\Delta$-regular caterpillars.
\end{itemize}

Finally, we point out that one could investigate what graphs can be packed/placed into a $k$-planar graph for a given value of $k$,  instead of studying how $k$ varies with the number $h$ and the vertex degree  of the caterpillars that are packed/placed.  While the interested reader can refer to~\cite{ANGELINI201823} for results with $k=1$, the following theorem gives a preliminary result for $k = 2$ (see the appendix for a proof).
Notice that  \Cref{eq:crossings} in the proof of \Cref{th:characterization}  would give upper bounds in the range $[86,137]$ for the caterpillars considered by the following theorem.

\begin{restatable}{theorem}{regularsmall}\label{th:regularsmall}
    A $\Delta$-regular caterpillar with $4\leq \Delta \leq 7$ admits a $2$-planar $3$-placement.
\end{restatable}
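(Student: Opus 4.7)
Plan: I would prove the theorem by explicit construction, separately for each value of $\Delta \in \{4,5,6,7\}$. For each such $\Delta$, I would first select a specific $\Delta$-regular caterpillar $C$ of small size. The natural candidates are caterpillars with a short spine (say $\sigma=2$, giving a double-star on $n=2\Delta$ vertices, or $\sigma=3$ to keep the parity side condition of \Cref{th:characterization} satisfied), so that the host graph is small enough to analyze by hand. For each such $C$ the necessary conditions of \Cref{th:characterization} hold, and hence some $3$-placement certainly exists; the task is to pick one whose underlying host graph is $2$-planar.

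Next, I would define the combinatorial $3$-placement using a cyclic construction: place the $n$ vertices of the host in convex position on a circle, fix an embedding of $C_1$, and obtain $C_2$ and $C_3$ as rotations of $C_1$ by suitable amounts around the circle. Edge-disjointness of the three spanning subtrees follows from a rotational argument in the spirit of \Cref{le:rotation}, so this step is essentially mechanical once the base embedding is chosen.

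The heart of the proof is to exhibit a $2$-planar drawing of the resulting host graph. As the authors note, the generic bound of \Cref{eq:crossings} yields crossing counts in the range $[86,137]$ for these caterpillars, so the zig-zag drawing used in \Cref{th:characterization} is far too coarse. Instead, I would draw each of $C_1,C_2,C_3$ using a mixture of edges routed inside and outside the convex hull of the vertex set, analogous to the inner and outer variants depicted in \Cref{fi:zig-zag-drawing-inner-outer}. The inside/outside assignment must be tuned so that edges of different copies cross only a bounded number of times, and in particular so that any fixed edge meets at most two other edges from other copies in total.

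The main obstacle, and the most delicate step, is the $2$-planarity verification. I anticipate that the correct inside/outside assignment and the correct rotation offsets will differ across the four values of $\Delta$, and that the verification for each $\Delta$ would proceed essentially by inspection of a dedicated figure showing the chord arrangement. Because the target of only $2$ crossings per edge is so aggressive relative to the generic bound, the argument must crucially exploit both the $3$-fold cyclic symmetry available when $h=3$ and the limited degree $\Delta\leq 7$, and I would not expect a unified closed-form proof: a case-by-case analysis, supported by explicit figures, seems unavoidable.
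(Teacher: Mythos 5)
There are two genuine gaps. The first is about what is being proved: the statement is universal over all $\Delta$-regular caterpillars with $4\leq\Delta\leq 7$, i.e., over every spine length, and the paper's proof accordingly handles an arbitrary number of spine vertices. Your plan of fixing one small caterpillar per $\Delta$ (spine length $2$ or $3$) would at best prove an existential variant and does not establish the theorem.

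The second gap is the decisive one: the step that carries all the content --- exhibiting a drawing with at most two crossings per edge --- is deferred to ``inspection of a dedicated figure,'' and the mechanism you do commit to cannot work. If the three copies are rotations of a common zig-zag-style base embedding and are merely assigned to the inside or outside of the circle, then two copies must share a side, and two zig-zag drawings on the same side at a small rotational offset force $\Omega(\Delta)$ crossings on some edge: already for $\Delta=4$, $\sigma=2$, $n=8$ the spine edge of one copy is crossed five times by a copy rotated by one step, so no side assignment of whole copies (nor the per-edge refinement you sketch, absent further ideas) gets anywhere near two crossings per edge. The missing idea --- which is the actual substance of the paper's proof --- is to abandon the zig-zag order altogether in favour of a circular order in which each spine vertex is immediately followed by its own leaves. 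With that order, $C_1$ drawn entirely outside and $C_2$ drawn entirely inside (shifted by one position) cross each other zero times, and only the third copy must be threaded carefully: its spine is routed outside, where each of its spine edges meets at most two consecutive spine edges of $C_1$ and vice versa, and its leaves are routed inside, with a bespoke, non-rotational placement of its spine vertices onto the positions of the last leaves of $C_1$'s spine vertices (with one variation for $\Delta=7$). Without a structural device of this kind that makes two of the three copies mutually non-crossing, ``tuning the inside/outside assignment'' has no route to $2$-planarity; you correctly identified that \Cref{eq:crossings} is far too coarse, but did not supply what replaces it.
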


\bibliographystyle{plainurl}
\bibliography{biblio}

\clearpage
\appendix

\section{Missing cases for the proof of  Lemma~\ref{le:no-multiple-edges-multiple}}\label{ap:missing-cases}
    
 \medskip \noindent \textbf{Case 1.b: $v_{g_1}$ and $v_{g_2}$ are spine vertices.}
 By \Cref{prop:spine-indices} we have, for some $d_1,d_2 \in \mathbb{N}$:
    \begin{equation}\label{eq:c1b-i1}
        g_1=r_1+d_1(\Delta_1-1)=r_1+qd_1(\Delta_2-1).
    \end{equation}
    and 
    \begin{equation}\label{eq:c1b-i2}
        g_2=r_2+d_2(\Delta_2-1).
    \end{equation}
    
    If $v_{g_1}$ coincides with $v_{g_2}$, we have 
    $g_1=g_2$; from \Cref{eq:c1b-i1} and \Cref{eq:c1b-i2} we obtain:
    \begin{equation}\label{eq:c1b-i1-A}
        r_2-r_1 = (qd_1-d_2)(\Delta_2-1).
    \end{equation}
    
    \noindent Concerning $v_{i_1}$ and $v_{i_2}$, we have:
    \begin{equation*}\label{eq:c1b-g1}
    i_1^{m} \leq i_1 \leq i_1^{M};
    \end{equation*}
    \begin{equation*}\label{eq:c1b-g2}
    i_2^{m} \leq i_2 \leq i_2^{M}.
    \end{equation*}
   
    \noindent with $i_l^{m}=i_l+c_l(\Delta_l-1)$, $i_l^{M}=i_l+(c_l+1)(\Delta_l-1)$ for some $c_l \in \mathbb{N}$ such that $c_l+d_l=\left\lceil \frac{\sigma_l}{2}\right \rceil-1$, where $\sigma_l$ is the number of spine vertices of $C_l$, for $l=1,2$.     

    \noindent We prove that $i_1^{M} < i_2^{m}$, which implies $i_1 \neq i_2$. To have $i_1^{M} < i_2^{m}$ it must be:
    \begin{align*}
        &j_1+(c_1+1)(\Delta_1-1) < j_2+c_2(\Delta_2-1)\\
        &j_1+q(c_1+1)(\Delta_2-1) < j_2+c_2(\Delta_2-1)\\
        &j_2-j_1 > (qc_1+q-c_2) (\Delta_2-1)\numberthis \label{eq:c1b-i1-B}.
    \end{align*}
    \noindent Since $r_l=j_l+\frac{n-(\Delta_l-1)(\sigma_l \mod 2)}{2}$, for $l=1,2$, \Cref{eq:c1b-i1-A} can be rewritten as:
    \begin{equation}\label{eq:c1b-i1-A'}
        j_2-j_1=\left ((qd_1-d_2) +\frac{(\sigma_2 \mod 2)}{2}-\frac{q(\sigma_1 \mod 2)}{2}\right )(\Delta_2-1).
    \end{equation}
\noindent Combining \Cref{eq:c1b-i1-A'} and \Cref{eq:c1b-i1-B} we obtain:
    \begin{equation}\label{eq:c1b-i1-C}
        c_2-qc_1-q> -\frac{1}{2}.
    \end{equation}

\noindent In summary, to have $i_1^{M} < i_2^{m}$ \Cref{eq:c1b-i1-C} must hold. On the other hand, from \Cref{eq:c1b-i1-A'} and from the hypothesis that $j_2-j_1>0$ we obtain $c_2-qc_1-q>-1$ which, since $c_2-qc_1-q$ is integer, can be rewritten as $c_2-qc_1-q\geq 0$. This implies that \Cref{eq:c1b-i1-C} holds and therefore that $i_1^{M} < i_2^{m}$ and $i_1 \neq i_2$.
    
 \medskip \noindent \textbf{Case 1.c: $v_{i_1}$ and $v_{g_2}$ are spine vertices.}
By \Cref{prop:spine-indices} we have, for some $c_1 \in \mathbb{N}$:
    \begin{equation}\label{eq:c1c-i1}
        i_1=j_1+c_1(\Delta_1-1)=j_1+qc_1(\Delta_2-1).
    \end{equation}
We also have, for some $c_2 \in \mathbb{N}$ and $0 \leq \alpha_2 < \Delta_2-1$:
    \begin{equation}\label{eq:c1c-i2}
        i_2=j_2+c_2(\Delta_2-1) + \alpha_2.
    \end{equation}
    
    If $v_{i_1}$ coincides with $v_{i_2}$, we have 
    $i_1=i_2$; from \Cref{eq:c1c-i1} and \Cref{eq:c1c-i2} we obtain:
    \begin{equation}\label{eq:c1c-i1-A}
        j_2-j_1 = (qc_1-c_2)(\Delta_2-1) - \alpha_2.
    \end{equation}
    
    \noindent Concerning $v_{g_1}$, we have:
    \begin{equation*}\label{eq:c1c-g1}
    g_1^{m} \leq g_1 \leq g_1^{M};
    \end{equation*}

    \noindent with $g_1^{m}=r_1+d_1(\Delta_1-1)$, $g_1^{M}=r_1+(d_1+1)(\Delta_1-1)$ for some $d_1 \in \mathbb{N}$ such that $c_1+d_1=\left\lceil \frac{\sigma_1}{2}\right \rceil-1$, where $\sigma_1$ is the number of spine vertices of $C_1$.
    
    \noindent Since $v_{g_2}$ is a vertex in the lower part of $\Gamma_2$, it must be $g_2 = r_2 + d_2(\Delta_2 - 1)$.

    \noindent We prove that $g_1^{M} < g_2$, which implies $g_1 \neq g_2$. To have $g_1^{M} < g_2$ it must be:
    \begin{align*}
        &r_1+(d_1+1)(\Delta_1-1) < r_2+d_2(\Delta_2-1)\\
        &r_1+q(d_1+1)(\Delta_2-1) < r_2+d_2(\Delta_2-1)\\
        &r_2-r_1 > (qd_1+q-d_2) (\Delta_2-1)\numberthis\label{eq:c1c-i1-B}.
    \end{align*}
    \noindent Since $r_l=j_l+\frac{n-(\Delta_l-1)(\sigma_l \mod 2)}{2}$, for $l=1,2$, \Cref{eq:c1c-i1-B} can be rewritten as:
    \begin{equation}\label{eq:c1c-i1-B'}
        j_2-j_1>\left( (qd_1+q-d_2) +\frac{(\sigma_2 \mod 2)}{2}-\frac{q(\sigma_1 \mod 2) }{2}\right)(\Delta_2-1).
    \end{equation}
\noindent Combining \Cref{eq:c1c-i1-A} and \Cref{eq:c1c-i1-B'} we obtain:
    \begin{equation*}
        qc_1-c_2-\frac{\alpha_2}{\Delta_2-1}> (qd_1+q-d_2) +\frac{(\sigma_2 \mod 2)}{2}-\frac{q(\sigma_1 \mod 2) }{2}.
    \end{equation*}

\noindent Since $c_l+d_l=\left\lceil \frac{\sigma_l}{2}\right \rceil-1$, we have $d_l= \frac{\sigma_l+1(\sigma_l \mod 2)}{2}-c_l-1$, for $l=1,2$; replacing $d_1$ and $d_2$ in the previous equation, we obtain:

    \begin{align*}
        &qc_1-c_2-\frac{\alpha_2}{\Delta_2-1}> \frac{q\sigma_1}{2}+\frac{q(\sigma_1 \mod 2)}{2}-qc_1-q+q-\frac{\sigma_2}{2}-\frac{1(\sigma_2 \mod 2)}{2}+c_2+\\
        &+1+\frac{\sigma_2 \mod 2}{2}-\frac{q(\sigma_1 \mod 2)}{2}
    \end{align*}
\noindent which, considering that $\sigma_2=\frac{n-2}{\Delta_2-1}=\frac{q(n-2)}{\Delta_1-1}=q\sigma_1$, implies:
    \begin{equation}\label{eq:c1c-i1-C}
        qc_1-c_2> \frac{1}{2} + \frac{\alpha_2}{2(\Delta_2-1)}
    \end{equation}

\noindent In summary, to have $g_1^{M} < g_2$ \Cref{eq:c1c-i1-C} must hold. On the other hand, from \Cref{eq:c1c-i1-A} and from the hypothesis that $j_2-j_1>0$ we obtain $(qc_1-c_2)(\Delta_2-1)-\alpha_2>0$ which, since $(\Delta_2-1)>0$, implies $qc_1-c_2>\frac{\alpha_2}{\Delta_2-1}$.
Since $0 \leq \frac{\alpha_2}{\Delta_2-1} <1$ and $qc_1-c_2$ is integer, we have $qc_1-c_2\geq 1$.
This implies that \Cref{eq:c1c-i1-C} holds and therefore that $g_1^{M} < g_2$ and $g_1 \neq g_2$.
    
 \medskip \noindent \textbf{Case 1.d: $v_{g_1}$ and $v_{i_2}$ are spine vertices.}
 By \Cref{prop:spine-indices} we have, for some $c_2 \in \mathbb{N}$:
    \begin{equation}\label{eq:c1d-i2}
        i_2=j_2+c_2(\Delta_2-1).
    \end{equation}

    We also have, for some $c_1 \in \mathbb{N}$ and $0 \leq \alpha_1 < \Delta_2-1$:
    \begin{equation}\label{eq:c1d-i1}
        i_1=j_1+c_1(\Delta_1-1)+\alpha_1=j_1+qc_1(\Delta_2-1)+\alpha_1.
    \end{equation}
    
    If $v_{i_1}$ coincides with $v_{i_2}$, we have 
    $i_1=i_2$; from \Cref{eq:c1d-i1} and \Cref{eq:c1d-i2} we obtain:
    \begin{equation}\label{eq:c1d-i1-A}
        j_2-j_1 = (qc_1-c_2)(\Delta_2-1) + \alpha_1.
    \end{equation}
    
    \noindent Concerning $v_{g_2}$, we have:
    \begin{equation*}\label{eq:c1d-g2}
    g_2^{m} \leq g_2 \leq g_2^{M}.
    \end{equation*}
   
    \noindent with $g_2^{m}=r_2+d_2(\Delta_2-1)$, $g_2^{M}=r_2+(d_2+1)(\Delta_2-1)$ for some $d_2 \in \mathbb{N}$ such that $c_2+d_2=\left\lceil \frac{\sigma_2}{2}\right \rceil-1$, where $\sigma_2$ is the number of spine vertices of $C_2$.
    
    \noindent Since $v_{g_1}$ is a vertex in the lower part of $\Gamma_1$, it must be $g_1 = r_1 + d_1(\Delta_1 - 1)$.

    \noindent We prove that $g_1 < g_2^{m}$, which implies $g_1 \neq g_2$. To have $g_1 < g_2^{m}$ it must be:
    \begin{align*}
        &r_1+d_1(\Delta_1-1) < r_2+d_2(\Delta_2-1)\\
        &r_1+qd_1(\Delta_2-1) < r_2+d_2(\Delta_2-1)\\
        &r_2-r_1 > (qd_1-d_2) (\Delta_2-1)\numberthis\label{eq:c1d-i1-B}.
    \end{align*}
    \noindent Since $r_l=j_l+\frac{n-(\Delta_l-1)(\sigma_l \mod 2)}{2}$, for $l=1,2$, \Cref{eq:c1d-i1-B} can be rewritten as:
    \begin{equation}\label{eq:c1d-i1-B'}
        j_2-j_1>\left( (qd_1-d_2) +\frac{(\sigma_2 \mod 2)}{2}-\frac{q(\sigma_1 \mod 2) }{2}\right)(\Delta_2-1).
    \end{equation}
\noindent Combining \Cref{eq:c1d-i1-A} and \Cref{eq:c1d-i1-B'} we obtain:
    \begin{equation*}
        qc_1-c_2+\frac{\alpha_1}{\Delta_2-1}> (qd_1-d_2) +\frac{(\sigma_2 \mod 2)}{2}-\frac{q(\sigma_1 \mod 2) }{2}.
    \end{equation*}

\noindent Since $c_l+d_l=\left\lceil \frac{\sigma_l}{2}\right \rceil-1$, we have $d_l= \frac{\sigma_l+1(\sigma_l \mod 2)}{2}-c_l-1$, for $l=1,2$; replacing $d_1$ and $d_2$ in the previous equation, we obtain:
    \begin{align*}
        &qc_1-c_2+\frac{\alpha_1}{\Delta_2-1}> \frac{q\sigma_1}{2}+\frac{q(\sigma_1 \mod 2)}{2}-qc_1-q-\frac{\sigma_2}{2}-\frac{1(\sigma_2 \mod 2)}{2}+c_2+\\
        &+1+\frac{\sigma_2 \mod 2}{2}-\frac{q(\sigma_1 \mod 2)}{2}
    \end{align*}
\noindent which, considering that $\sigma_2=\frac{n-2}{\Delta_2-1}=\frac{q(n-2)}{\Delta_1-1}=q\sigma_1$, implies:
    \begin{equation}\label{eq:c1d-i1-C}
        qc_1-c_2> \frac{1-q}{2} - \frac{\alpha_1}{2(\Delta_2-1)}
    \end{equation}

\noindent In summary, to have $g_1 < g_2^{m}$ \Cref{eq:c1d-i1-C} must hold. On the other hand, from \Cref{eq:c1d-i1-A} and from the hypothesis that $j_2-j_1>0$ we obtain $(qc_1-c_2)(\Delta_2-1)+\alpha_1>0$ which, since $(\Delta_2-1)>0$, implies $qc_1-c_2>-\frac{\alpha_1}{\Delta_2-1}$.
Since $0 \leq \frac{\alpha_1}{\Delta_2-1} <1$ and $qc_1-c_2$ is integer, we have $qc_1-c_2> 0$. Since $q$ is a positive integer, this implies that \Cref{eq:c1d-i1-C} holds and therefore that $g_1 < g_2^{m}$ and $g_1 \neq g_2$.

\medskip \noindent \textbf{Case 2.b: $v_{g_1}$ and $v_{g_2}$ are spine vertices.}
Since $v_{g_2}$ is a vertex in the lower part of $\Gamma_2$, it must be $g_2=r_2+d_2(\Delta_2-1)$. If $v_{g_2}$ coincides with $v_{i_1}$, as explained above, it must be $i_1=g_2-n$. Combining the expression of $g_2$ with \Cref{eq:c1d-i1} we obtain:
    \begin{equation}\label{eq:c2b-i1-A}
        r_2-j_1 = (qc_1-d_2)(\Delta_2-1)+\alpha_1+n.
    \end{equation}
    
    \noindent Concerning $v_{i_2}$, we have:
    \begin{equation*}\label{eq:c2b-g1}
    i_2^{m} \leq i_2 \leq i_2^{M};
    \end{equation*}
       
    \noindent with $i_2^{M}=j_2+(c_2+1)(\Delta_2-1)$ for some $c_2 \in \mathbb{N}$ such that $c_2+d_2=\left\lceil \frac{\sigma_2}{2}\right \rceil-1$, where $\sigma_2$ is the number of spine vertices of $C_2$.     

    \noindent We prove that $i_2^{M} < g_1$, which implies $i_2 \neq g_1$. To have $i_2^{M} < g_1$ it must be:
    \begin{align*}
        &j_2+(c_2+1)(\Delta_2-1) < r_1+d_1(\Delta_1-1)\\
        &j_2+(c_2+1)(\Delta_2-1) < r_1+qd_1(\Delta_2-1)\\
        &j_2-r_1 < (qd_1-c_2-1) (\Delta_2-1)\numberthis\label{eq:c2b-i1-B}.
    \end{align*}
    \noindent Since $r_l=j_l+\frac{n-(\Delta_l-1)(\sigma_l \mod 2)}{2}$, for $l=1,2$, \Cref{eq:c2b-i1-A} can be rewritten as:
    \begin{equation}\label{eq:c2b-i1-A'}
        j_2-j_1 = (qc_1-d_2)(\Delta_2-1)/\alpha_1+\frac{n}{2}+\frac{(\Delta_2-1)(\sigma_2 \mod 2)}{2},
    \end{equation}    
    
\noindent while \Cref{eq:c2b-i1-B} can be rewritten as:
    \begin{equation}\label{eq:c2b-i1-B'}
        j_2-j_1<\left( (qd_1-c_2-1) -\frac{q(\sigma_1 \mod 2) }{2}\right)(\Delta_2-1)+ \frac{n}{2}.
    \end{equation}
    
\noindent Combining \Cref{eq:c2b-i1-A'} and \Cref{eq:c2b-i1-B'} we obtain:
    \begin{equation*}
        qc_1-d_2 < (qd_1-c_2-1) -\frac{(\sigma_2 \mod 2)}{2}-\frac{q(\sigma_1 \mod 2)}{2}-\frac{\alpha_1}{\Delta_2-1}.
    \end{equation*}

\noindent Since $c_l+d_l=\left\lceil \frac{\sigma_l}{2}\right \rceil-1$, we have $d_l= \frac{\sigma_l+1(\sigma_l \mod 2)}{2}-c_l-1$, for $l=1,2$; replacing $d_1$ and $d_2$ in the previous equation, we obtain:

    \begin{align*}
        &qc_1-\frac{\sigma_2}{2}-\frac{\sigma_2 \mod 2}{2}+c_2+1+\frac{\sigma_2 \mod 2}{2}+\frac{\alpha_1}{\Delta_2-1} < \frac{q\sigma_1}{2}+\frac{q(\sigma_1 \mod 2)}{2}-\\&-qc_1-q-c_2-\frac{q(\sigma_1 \mod 2)}{2}-1
    \end{align*}
\noindent which, considering that $\sigma_2=\frac{n-2}{\Delta_2-1}=\frac{q(n-2)}{\Delta_1-1}=q\sigma_1$, implies:
    \begin{equation}\label{eq:c2b-i1-C}
        qc_1-\frac{q\sigma_1}{2}+c_2+1< -\frac{q}{2}-\frac{\alpha_1}{2(\Delta_2-1)}
    \end{equation}

\noindent In summary, to have $i_2^M < g_1^{m}$ \Cref{eq:c2b-i1-C} must hold. On the other hand, from \Cref{eq:c2b-i1-A'} and from the hypothesis that $j_2-j_1<\frac{n-(\Delta_1-1)}{2}=\frac{n-q(\Delta_2-1)}{2}$ we obtain:
\begin{equation}\label{eq:c2b-i1-D}
        qc_1-\frac{q\sigma_1}{2} +c_2+1<-\frac{q}{2}-\frac{\alpha_1}{\Delta_2-1}.
\end{equation}

We have that $-\frac{q}{2}-\frac{\alpha_1}{\Delta_2-1} < -\frac{q}{2}-\frac{\alpha_1}{2(\Delta_2-1)}$, since $\frac{\alpha_1}{2(\Delta_2-1)}<\frac{1}{2}$.
In other words, \Cref{eq:c2b-i1-D} implies that \Cref{eq:c2b-i1-C} holds and therefore that $i_2^M < g_1$ and $i_2 \neq g_1$.

 \medskip \noindent \textbf{Case 2.c: $v_{i_1}$ and $v_{g_2}$ are spine vertices.}
Since $v_{g_2}$ is a vertex in the lower part of $\Gamma_2$, it must be $g_2=r_2+d_2(\Delta_2-1)$. If $v_{g_2}$ coincides with $v_{i_1}$, as explained above, it must be $i_1=g_2-n$. Combining the expression of $g_2$ with \Cref{eq:c1c-i1} we obtain:
    \begin{equation}\label{eq:c2c-i1-A}
        r_2-j_1 = (qc_1-d_2)(\Delta_2-1)+n.
    \end{equation}
    
    \noindent Concerning $v_{g_1}$, we have:
    \begin{equation*}\label{eq:c2c-g1}
    g_1^{m} \leq g_1 \leq g_1^{M};
    \end{equation*}
       
    \noindent with $g_1^{m}=r_1+d_1(\Delta_1-1)$, $g_1^{M}=r_1+(d_1+1)(\Delta_1-1)$ for some $d_1 \in \mathbb{N}$ such that $c_1+d_1=\left\lceil \frac{\sigma_1}{2}\right \rceil-1$, where $\sigma_1$ is the number of spine vertices of $C_1$.     

    \noindent We prove that $i_2^{M} < g_1^{m}$, which implies $i_2 \neq g_1$. To have $i_2^{M} < g_1^{m}$ it must be:
    \begin{align*}
        &j_2+(c_2+1)(\Delta_2-1) < r_1+d_1(\Delta_1-1)\\
        &j_2+(c_2+1)(\Delta_2-1) < r_1+qd_1(\Delta_2-1)\\
        &j_2-r_1 < (qd_1-c_2-1) (\Delta_2-1)\numberthis\label{eq:c2c-i1-B}.
    \end{align*}
    \noindent Since $r_l=j_l+\frac{n-(\Delta_l-1)(\sigma_l \mod 2)}{2}$, for $l=1,2$, \Cref{eq:c2c-i1-A} can be rewritten as:
    \begin{equation}\label{eq:c2c-i1-A'}
        j_2-j_1 = (qc_1-d_2)(\Delta_2-1)+\frac{n}{2}+\frac{(\Delta_2-1)(\sigma_2 \mod 2)}{2},
    \end{equation}    
    
\noindent while \Cref{eq:c2c-i1-B} can be rewritten as:
    \begin{equation}\label{eq:c2c-i1-B'}
        j_2-j_1<\left( (qd_1-c_2-1) -\frac{q(\sigma_1 \mod 2) }{2}\right)(\Delta_2-1)+ \frac{n}{2}.
    \end{equation}
    
\noindent Combining \Cref{eq:c2c-i1-A'} and \Cref{eq:c2c-i1-B'} we obtain:
    \begin{equation*}
        qc_1-d_2 < (qd_1-c_2-1) -\frac{(\sigma_2 \mod 2)}{2}-\frac{q(\sigma_1 \mod 2)}{2}.
    \end{equation*}

\noindent Since $c_l+d_l=\left\lceil \frac{\sigma_l}{2}\right \rceil-1$, we have $d_l= \frac{\sigma_l+1(\sigma_l \mod 2)}{2}-c_l-1$, for $l=1,2$; replacing $d_1$ and $d_2$ in the previous equation, we obtain:
    \begin{align*}
        &qc_1-\frac{\sigma_2}{2}-\frac{\sigma_2 \mod 2}{2}+c_2+1 < \frac{q\sigma_1}{2}+\frac{q(\sigma_1 \mod 2)}{2}-qc_1-q-c_2-1-\\
        &-\frac{\sigma_2 \mod 2}{2}-\frac{q(\sigma_1 \mod 2)}{2}
    \end{align*}
\noindent which, considering that $\sigma_2=\frac{n-2}{\Delta_2-1}=\frac{q(n-2)}{\Delta_1-1}=q\sigma_1$, implies:
    \begin{equation}\label{eq:c2c-i1-C}
        qc_1-\frac{q\sigma_1}{2}+c_2+1< -\frac{q}{2}
    \end{equation}

\noindent In summary, to have $i_2^M < g_1^{m}$ \Cref{eq:c2c-i1-C} must hold. On the other hand, from \Cref{eq:c2c-i1-A'} and from the hypothesis that $j_2-j_1<\frac{n-(\Delta_1-1)}{2}=\frac{n-q(\Delta_2-1)}{2}$ we obtain:
\begin{equation*}\label{eq:c2c-i1-D}
        qc_1-d_2 +\frac{1}{2}(\sigma_2 \mod 2)<-\frac{q}{2}.
\end{equation*}

\noindent Replacing again $d_2$ with $\frac{\sigma_2+1(\sigma_2 \mod 2)}{2}-c_2-1$, we obtain:
\begin{equation}\label{eq:c2c-i1-D'}
        qc_1-\frac{q\sigma_1}{2}+c_2+1<-\frac{q}{2}.
\end{equation}

Since \Cref{eq:c2c-i1-C} is equivalent to \Cref{eq:c2c-i1-D'}, we can conclude that \Cref{eq:c2c-i1-C} holds and therefore that $i_2^M < g_1^{m}$ and $i_2 \neq g_1$.
    
 \medskip \noindent \textbf{Case 2.d: $v_{g_1}$ and $v_{i_2}$ are spine vertices.}
 Since $v_{g_1}$ is a vertex in the lower part of $\Gamma_1$, it must be $g_1=r_1+d_1(\Delta_1-1)$. If $v_{g_1}$ coincides with $v_{i_2}$, combining the expression of $g_1$ with \Cref{eq:c1-i2} we obtain:
    \begin{equation}\label{eq:c2d-i1-A}
        j_2-r_1 = (qd_1-c_2)(\Delta_2-1).
    \end{equation}
    
    \noindent Concerning $v_{i_1}$ $v_{g_2}$, we have:
    \begin{equation*}
    i_1^{m} \leq i_1 \leq i_1^{M};
    \end{equation*}
    and
    \begin{equation*}
    g_2^{m} \leq g_2 \leq g_2^{M}.
    \end{equation*}
       
    \noindent with $i_1^{m}=j_1+c_1(\Delta_1-1)=j_1+qc_1(\Delta_2-1)$, $g_2^{M}=r_2+(d_2+1)(\Delta_2-1)-n$ for some $d_2 \in \mathbb{N}$ such that $c_2+d_2=\left\lceil \frac{\sigma_2}{2}\right \rceil-1$, where $\sigma_2$ is the number of spine vertices of $C_2$.     

    \noindent We prove that $g_2^{M} < i_1^{m}$, which implies $g_2 \neq i_1$. To have $g_2^{M} < i_1^{m}$ it must be:
    \begin{align*}
        &r_2+(d_2+1)(\Delta_2-1)-n < j_1+c_1(\Delta_1-1)\\
        &r_2+(d_2+1)(\Delta_2-1)-n < j_1+qc_1(\Delta_2-1)\\
        &r_2-j_1 < (qc_1-d_2-1)(\Delta_2-1)+n (\Delta_2-1)\numberthis\label{eq:c2d-i1-B}.
    \end{align*}
    \noindent Since $r_l=j_l+\frac{n-(\Delta_l-1)(\sigma_l \mod 2)}{2}$, for $l=1,2$, \Cref{eq:c2d-i1-A} can be rewritten as:
    \begin{equation}\label{eq:c2d-i1-A'}
        j_2-j_1 = (qd_1-c_2)(\Delta_2-1)-\frac{q(\Delta_2-1)(\sigma_1 \mod 2)}{2}+\frac{n}{2},
    \end{equation}    
    
\noindent while \Cref{eq:c2d-i1-B} can be rewritten as:
    \begin{equation}\label{eq:c2d-i1-B'}
        j_2-j_1< (qc_1-d_2-1)(\Delta_2-1) +\frac{(\Delta_2-1)(\sigma_2 \mod 2) }{2}+ \frac{n}{2}.
    \end{equation}
    
\noindent Combining \Cref{eq:c2d-i1-A'} and \Cref{eq:c2d-i1-B'} we obtain:
    \begin{equation*}
        qd_1-c_2 -\frac{q(\sigma_1 \mod 2)}{2} < (qc_1-d_2-1) +\frac{(\sigma_2 \mod 2)}{2}.
    \end{equation*}

\noindent Since $c_l+d_l=\left\lceil \frac{\sigma_l}{2}\right \rceil-1$, we have $d_l= \frac{\sigma_l+1(\sigma_l \mod 2)}{2}-c_l-1$, for $l=1,2$; replacing $d_1$ and $d_2$ in the previous equation, we obtain:
    \begin{align*}
        &\frac{q\sigma_1}{2}+\frac{q(\sigma_1 \mod 2)}{2}-qc_1-q-c_2-\frac{q(\sigma_1 \mod 2)}{2} < qc_1- \frac{\sigma_2}{2}-\frac{\sigma_2 \mod 2}{2}+\\
        &+c_2+1-1 +\frac{\sigma_2 \mod 2}{2}
    \end{align*}
\noindent which, considering that $\sigma_2=\frac{n-2}{\Delta_2-1}=\frac{q(n-2)}{\Delta_1-1}=q\sigma_1$, implies:
    \begin{equation}\label{eq:c2d-i1-C}
        2 \left (qc_1-\frac{q\sigma_1}{2}+c_2 \right)> -q
    \end{equation}

\noindent In summary, to have $g_2^M < i_1^{m}$ \Cref{eq:c2d-i1-C} must hold. On the other hand, from \Cref{eq:c2d-i1-A'} and from the hypothesis that $j_2-j_1<\frac{n-(\Delta_1-1)}{2}=\frac{n-q(\Delta_2-1)}{2}$ we obtain:
\begin{equation*}\label{eq:c2d-i1-D}
        qd_1-c_2 -\frac{q}{2}(\sigma_1 \mod 2)<-\frac{q}{2}.
\end{equation*}

\noindent Replacing again $d_1$ with $\frac{\sigma_1+1(\sigma_1 \mod 2)}{2}-c_1-1$, we obtain:
\begin{equation}\label{eq:c2d-i1-D'}
        2 \left (qc_1-\frac{q\sigma_1}{2}+c_2 \right)>-q.
\end{equation}

Since \Cref{eq:c2d-i1-C} is equivalent to \Cref{eq:c2d-i1-D'}, we can conclude that \Cref{eq:c2d-i1-C} holds and therefore that $g_2^M < i_1^{m}$ and $g_2 \neq i_1$.

\section{Proof of Theorem~\ref{th:regularsmall}}

\regularsmall*
\begin{proof}
    Let $C_1$, $C_2$, and $C_3$ be three copies (shown in red, blue and green, respectively, in \Cref{fi:2-planar-k-ary-caterpillar}) of a $\Delta$-regular caterpillar $C$ with $4 \leq \Delta \leq 7$. We denote the vertices of caterpillar $C_j$ for $j=1,2,3$ as follows; the spine vertices are denoted as $v^j_0, v^j_1, \dots, v^j_{c-1}$ in the order they appear along the spine; the leaves adjacent to vertex $v^j_i$ (for $i=1,2,\dots,c-1$) are denoted as $u^j_{i,l}$ with $l=0,1,\dots,d$, where $d= \Delta-2$ if $i=0$ or $i=c-1$ and $d=\Delta-3$ if $0 < i < c-1$. 
	
	Let $p_0, p_1, \dots,p_{n-1}$ be $n$ points on a circle in clockwise order (with indices taken modulo $n$). To construct the packing, we compute a drawing for each caterpillar such that the vertices are mapped to points $p_1, p_2, \dots,p_n$ and the union of the three drawings is a $2$-planar drawing. We describe the construction for $\Delta=4,5,6$ (see also \Cref{fi:2-planar-k-ary-caterpillar-b,fi:2-planar-k-ary-caterpillar-c,fi:2-planar-k-ary-caterpillar-d}); the construction in the case $\Delta=7$ is slightly different and it is shown in \Cref{fi:2-planar-k-ary-caterpillar-e}.
	
	Caterpillar $C_1$ is drawn outside the circle so that vertex $v^1_0$ is mapped to point $p_0$, each vertex $v^1_i$, for $i=1,2,\dots, c-1$ is mapped to $p_{i(\Delta-1)+1}$, each leaf $u^1_{0,l}$ is mapped to the point $p_{l+1}$, and each leaf $u^1_{i,l}$ is mapped to the point $p_{i(\Delta-1)+2+l}$.
	In other words, each vertex of the spine is followed clockwise by its leaves and the last of these leaves is followed by the next vertex of the spine. Caterpillar $C_2$ is drawn inside the circle so that vertex $v^2_i$ is mapped to the point immediately following clockwise the point hosting $v^1_i$ and each leaf $u^2_{i,l}$ is mapped to the point immediately following clockwise $u^1_{i,l}$.  Clearly, the drawings of the first two caterpillars do not cross each other because they are on different sides of the circle; also, their union has no multiple edges. Concerning $C_3$, the vertex $v^3_i$, for $i=0,1,\dots,c-2$ is mapped to the point that hosts $u^1_{i,d}$ and $u^2_{i,d-1}$, i.e., the last leaf of $v^1_i$ and the second last leaf of $v^2_i$; the vertex $v^3_{c-1}$  is mapped to the point that hosts $u^1_{i,d-1}$ and $u^2_{i,d-2}$, i.e., the second last leaf of $v^1_{c-1}$ and the third last leaf of $v^2_i$. About this mapping, observe that if we draw the edges of the spine of $C_3$ outside the circle, each edge of the spine of $C_3$ intersects two consecutive edges of the spine of $C_1$ and each edge of the spine of $C_1$ intersects at most two consecutive edges of the spine of $C_3$. To complete the drawing, we need to draw the leaves of $C_3$. Consider two consecutive spine vertices $v^3_i$ and $v^3_{i+1}$, with $0 \leq i \leq c-2$; between these two vertices there are $\Delta-2$ points not yet used by $C_3$, we connect the first two of these vertices in clockwise order to $v_i$. Depending on the value of $\Delta$, there remain $0$, $1$, or $2$ points between $v_i$ and $v_{i+1}$ not yet used by $C_3$;  we connect these points to $v_{i+1}$. Notice that, there remain to map $\Delta-3$ leaves adjacent to $v^3_0$ and $3$ leaves adjacent to $v^3_{c-1}$. On the other hand, there are $\Delta$ points not yet used by $C_3$ that are between $v^3_{c-1}$ and $v^3_0$ clockwise; we connect the three vertices following clockwise $v^3_{c-1}$ to $v^3_{c-1}$, and the remaining ones to $v^3_0$. All the edges of $C_3$ that are incident to leaves are drawn inside the circle. 
	This mapping of $C_3$ does not create multiple edges and gives rise to at most two crossings along the edges of $C_2$ and $C_3$. 
\end{proof}

\begin{figure}[tbp]
	\centering
	\subfigure[]{\label{fi:2-planar-k-ary-caterpillar-b}\includegraphics[width=0.24\columnwidth, page=7]{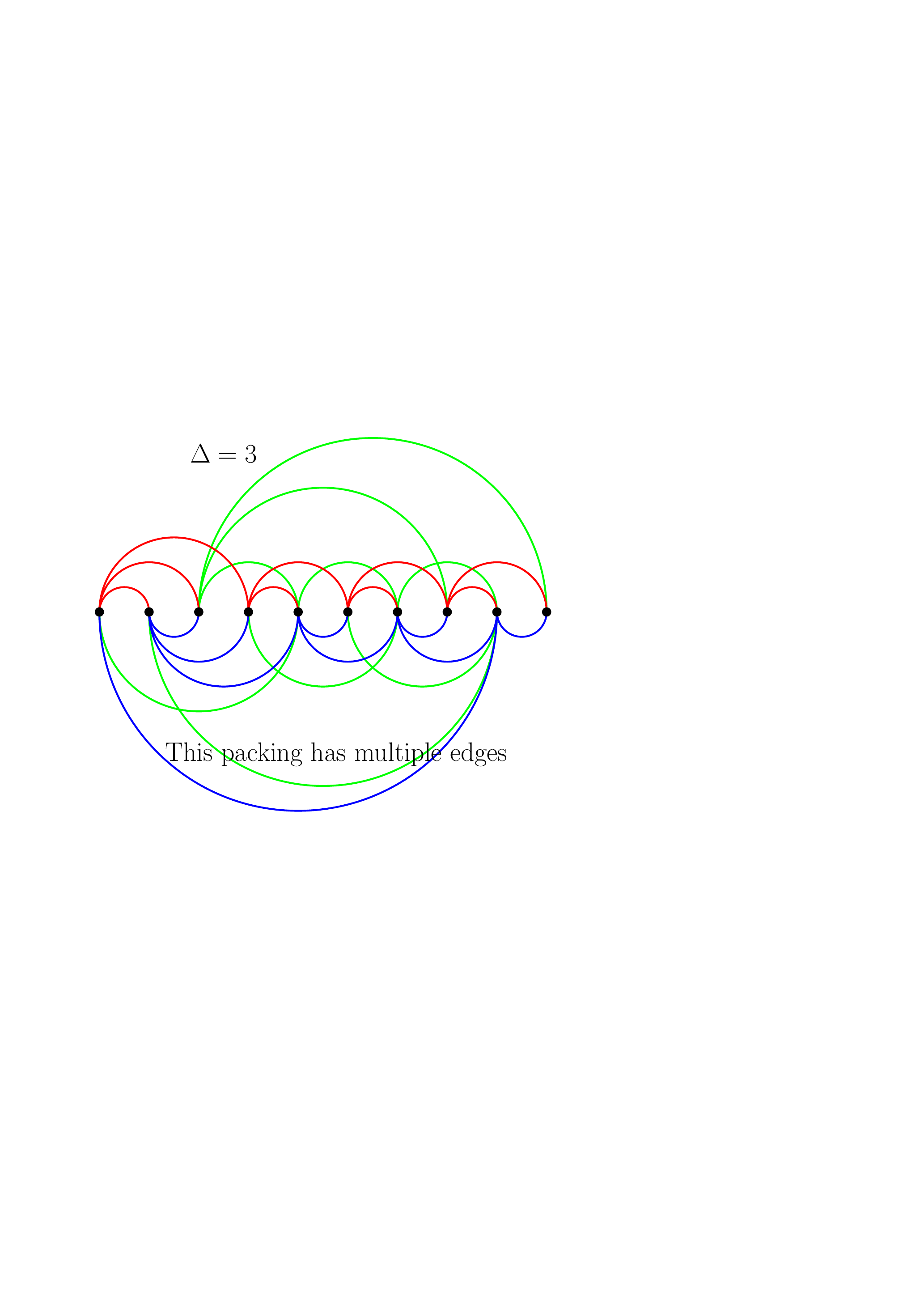}}\hfil 
	\subfigure[]{\label{fi:2-planar-k-ary-caterpillar-c}\includegraphics[width=0.24\columnwidth, page=8]{2-planar-k-ary-caterpillar}}
	\subfigure[]{\label{fi:2-planar-k-ary-caterpillar-d}\includegraphics[width=0.24\columnwidth, page=9]{2-planar-k-ary-caterpillar}}\hfil
	\subfigure[]{\label{fi:2-planar-k-ary-caterpillar-e}\includegraphics[width=0.24\columnwidth, page=10]{2-planar-k-ary-caterpillar}}
	\caption{\label{fi:2-planar-k-ary-caterpillar} $2$-planar $3$-placements of $\Delta$-regular caterpillars.}
\end{figure}
\end{document}